\newtheorem{thm}{Theorem}[section]
\newtheorem{prop}[thm]{Proposition}
\newtheorem{corol}[thm]{Corollary}
\newtheorem{lemma}[thm]{Lemma}
\theoremstyle{definition}
\newtheorem{mydef}[thm]{Definition}
\newtheorem{remark}[thm]{Remark}
\newcolumntype{L}{>{\centering\arraybackslash}m{3cm}}
\newcommand{\Z}{\mathbb{Z}}
\newcommand{\R}{\mathbb{R}}
\newcommand{\C}{\mathbb{C}}
\newcommand{\Q}{\mathbb{Q}}
\newcommand{\specialcell}[2][c]{%
  \begin{tabular}[#1]{@{}c@{}}#2\end{tabular}}
\newsavebox{\LRmat} 
\savebox{\LRmat}{$\left(\begin{smallmatrix}2&1\\1&1\end{smallmatrix}\right)$}
\begin{document}

\title{Non-geometric veering triangulations}

\author{Craig D. Hodgson}
\address{Department of Mathematics and Statistics \\
         The University of Melbourne \\
         Parkville, VIC, 3010, AUSTRALIA}
\email{craigdh@unimelb.edu.au}
\author{Ahmad Issa}
\curraddr{Department of Mathematics\\
The University of Texas at Austin\\
Austin, TX 78712-1202, USA}

\email{aissa@math.utexas.edu}
\author{Henry Segerman}
\curraddr{Department of Mathematics\\
Oklahoma State University\\
Stillwater, OK 74078, USA }

\email{segerman@math.okstate.edu}

\thanks{C.D.H, H.S were supported by the Australian Research Council grant DP1095760. A.I was supported by
a Master of Science National Scholarship and an Australian Postgraduate Award.}

\begin{abstract}
Recently, Ian Agol introduced a class of ``veering'' ideal triangulations 
for mapping tori of pseudo-Anosov homeomorphisms of surfaces punctured along the singular points.  These triangulations have very special combinatorial properties, and Agol asked if these are ``geometric'',  i.e. realised  in the complete hyperbolic metric with all tetrahedra positively oriented. This paper describes a computer program Veering, building on the program Trains by Toby Hall, for generating these triangulations starting from a description of the homeomorphism as a product of Dehn twists.  Using this we obtain the first examples of non-geometric veering triangulations; the smallest example we have found is a triangulation with 13 tetrahedra.
\end{abstract}

\maketitle

\section{Introduction}

The technique of decomposing finite-volume cusped hyperbolic 3-manifolds into ideal hyperbolic tetrahedra 
was introduced by Thurston \cite{ThNotes} and has proved very useful for understanding these 3-manifolds. 
For example, such triangulations provide an effective means of calculating geometric invariants and 
for computing deformations of hyperbolic structures, as used in the computer programs SnapPea \cite{SnapPea}, Snap  \cite{Snap} and SnapPy \cite{SnapPy}. 
A (topological) {\em ideal triangulation} of a 3-manifold $M$ is a decomposition of $M$ into ideal tetrahedra, 
that is, 3-simplices with their vertices removed, such that their faces are affinely glued in pairs.

A {\em geometric} ideal triangulation is an ideal triangulation where each tetrahedron can be assigned the shape of a positive volume
ideal hyperbolic tetrahedron, such that the tetrahedra glue together coherently to define a global hyperbolic structure which agrees with the complete hyperbolic structure on $M$. A natural question to ask is whether every cusped hyperbolic 3-manifold admits a geometric ideal triangulation. This question remains unanswered.

In this paper we focus on ideal triangulations of fibred cusped hyperbolic 3-manifolds, that is, cusped hyperbolic 3-manifolds of the form $M_\varphi = S \times [0,1]/ (x,0) \sim (\varphi(x), 1)$, where $S$ is a punctured surface and $\varphi : S \rightarrow S$ is a pseudo-Anosov homeomorphism. For each such $\varphi$, Agol gives a construction of an ideal triangulation of $M_{\varphi^{\circ}}$, where $\varphi^{\circ}$ is the restriction of $\varphi$ to the surface obtained by puncturing $S$ at the singular points of the invariant foliations (see \cite{MR2866919}.)

This triangulation is a layered triangulation of $M_{\varphi^\circ}$ and is canonical in the sense that it is uniquely determined by $\varphi$. Furthermore, the triangulation satisfies a combinatorial condition called \emph{veering} (see Section \ref{sec:defns} for the definition). In fact, this triangulation is uniquely characterised by the veering condition, that is, every veering ideal triangulation of $M_{\varphi^{\circ}}$ which is layered with respect to $\varphi^\circ$ is isomorphic to the veering triangulation produced by Agol's construction (see \cite[Proposition 4.2]{MR2866919}). Agol poses the question:

\begin{center}{\bf Question: } \emph{Are the veering triangulations coming from this construction geometric?}\end{center}

In \cite{MR2860987} and \cite{FGAng}, it is shown that veering triangulations admit strict angle structures, which is a necessary condition for an ideal triangulation to be geometric. It can be checked that the well studied monodromy (or Floyd-Hatcher) triangulations are veering, so that they correspond to the triangulations produced by Agol's construction in the case that $S$ is a once-punctured torus. These triangulations are known to be geometric (see \cite{MR2255497} or \cite{MR1988201}.)  Many other examples of geometric veering triangulations were studied in \cite{MR2860987}.

In this paper we produce the first examples of {\em non-geometric} veering triangulations.
Currently, the smallest such example (in terms of the number of tetrahedra) known, 
described in Section \ref{sec:nongeo}, has 13 tetrahedra. It seems unlikely that a counterexample would have been found without a computer search, and it is still something of a mystery why veering triangulations are so frequently geometric.

In Section \ref{chapter:implementation}, we describe a computer program that we implemented, which, given a pseudo-Anosov homeomorphism of an oriented surface (of genus $g \ge 0$ with $p > 0$ punctures) described as a composition of Dehn twists and half twists about adjacent punctures (see Figure \ref{fig:twist_curves}), automates Agol's construction, producing veering triangulation files which can be readily input into the computer program SnapPy. In Section \ref{sec:example} we apply the algorithm given in Section \ref{chapter:implementation} on an example.

We find examples of veering triangulations which SnapPy reports are non-geometric, see Section \ref{chapter:results} and tables of data given in Appendix \ref{app:tables}. In Section \ref{sec:nongeo}, we outline how we rigorously verified that one 13 tetrahedron veering triangulation is not geometric.

In \cite{MR2866919}, Agol briefly mentions how periodic splitting sequences of train tracks give rise to conjugacy invariants which solve the restricted conjugacy problem for pseudo-Anosov mapping classes. As part of our computer program we implemented an algorithm which uses the periodic splitting sequences of train tracks to determine whether or not two pseudo-Anosov mapping classes are conjugate in the mapping class group. This is described in Section \ref{sec:conjtest}.

The program Veering \cite{Veering} described in this paper and tables of results are freely available at \url{http://www.ms.unimelb.edu.au/~veering/}.

We thank Toby Hall for his helpful comments regarding some technical details of the computer program Trains \cite{Trains}, on which our program relies. This paper is primarily based on work done as part of a Master's thesis \cite{Issa} by the second author.

\section{Definitions and background}\label{sec:defns}

Let $S$ be an orientable surface of genus $g$ with $p$ punctures and $\chi(S) < 0$. A \emph{train track} $\tau \subset S$ is a finite $1$-complex with $C^1$ embedded edges, each vertex of which is locally modeled on a switch (see Figure \ref{fig:tt_switch_condition}) so that each vertex has a well defined $1$-dimensional tangent space and which satisfies the following geometry condition \cite{PENNER}. If $R$ is a complementary region of $\tau$ in $S$ then the double of $R$ along $\partial R$ with non-smooth points removed has negative Euler characteristic. Edges of a train track are called \emph{branches}, and vertices are called \emph{switches}. 
A \emph{measured train track} is a train track $\tau$ together with a transverse measure $\mu$, which is a function assigning a positive weight to each edge of $\tau$ such that the switch condition holds, that is, at each switch the sum of the weights of edges on each side of the tangent space are equal (see Figure \ref{fig:tt_switch_condition}.) We sometimes refer to $\tau$ as a measured train track when the measure is understood from context. We also occasionally drop the adjective measured, when it is clear that the train track is measured. We will denote by $k \tau$ the measured train track with the same underlying train track $\tau$ but with measure scaled by $k \in \R_{>0}$. A train track is \emph{trivalent} if every switch has degree $3$. If $\tau$ is a trivalent train track and $e$ is an edge of $\tau$ then it has two ends. An end of $e$ is \emph{large} if it comes into a switch $s$ on the side of the tangent space at $s$ opposite the side with two incident branches, otherwise it is \emph{small}. The branch $e$ is \emph{large} if both of its ends are large, and similarly it is \emph{small} if both of its ends are small.

\begin{figure}[H]
\centerline{\includegraphics[width=200pt]{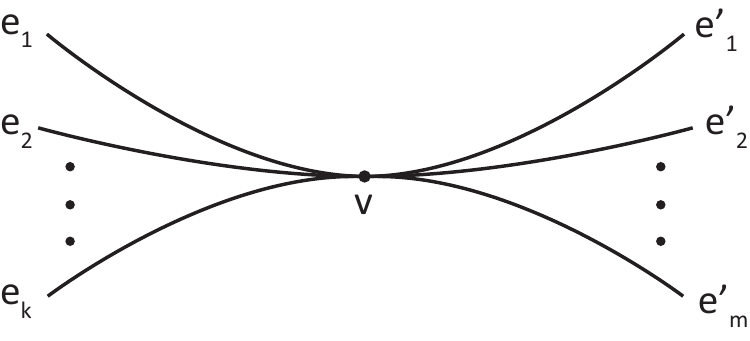}}
\caption{Model of train track switch, where $k,m > 0$. Switch condition: the sum of the weights of edges to the left and to the right of $v$ are equal.}
\label{fig:tt_switch_condition}
\end{figure}

Let $\varphi : S \rightarrow S$ be a homeomorphism. By the Nielsen-Thurston classification \cite{FLP}, $\varphi$ is isotopic to a homeomorphism $\varphi'$ which is either periodic, reducible (setwise fixes a non-empty union of finitely many disjoint essential simple closed curves) or pseudo-Anosov. In the pseudo-Anosov case there exist singular transverse measured foliations $\mathcal{F}^{\pm}$ on $S$ such that $\varphi'(\mathcal{F}^+) = \lambda \mathcal{F}^+$ and $\varphi'(\mathcal{F}^-) = \lambda^{-1} \mathcal{F}^-$, where $\lambda > 1$ is the \emph{dilatation} of $\varphi'$. Away from a finite minimal set of points $P \subset S$, each foliation $\mathcal{F}^{\pm}$ gives a decomposition of $S\backslash P$ into a disjoint union of curves, called \emph{leaves}. A finite line segment lying on a leaf of $\mathcal{F}^-$ can be thought of as scaled by $\varphi'$ by a factor of $\lambda$. The set of points $P$ together with the punctures of $S$ are called \emph{singular points} of $\mathcal{F}^\pm$. Fix a complete finite area hyperbolic metric on $S$. By removing the singular leaves of $\mathcal{F}^{-}$ (resp. $\mathcal{F}^{+}$), i.e. leaves which have an endpoint at a singular point, then isotoping each of the remaining leaves of the foliation to complete geodesic representatives, and finally taking the closure of the resulting subset of $S$, one obtains a geodesic lamination $\mathcal{L}^s$ (resp. $\mathcal{L}^u$) \cite[Construction 1.68]{MR2327361}. The geodesic lamination $\mathcal{L}^s$ (resp. $\mathcal{L}^u$) also inherits a transverse measure from $\mathcal{F}^{+}$ (resp. $\mathcal{F}^{-}$) and is called the \emph{stable} (resp. \emph{unstable}) measured geodesic laminations for $\varphi'$, see \cite{CAS} and \cite{CBNotes} for an alternative approach.

\begin{mydef} Let $(\tau, \mu)$ be a measured train track on a surface $S$, and let $e$ be a large branch with neighbouring edges labelled as in the left of Figure \ref{fig:tt_split_move}.  A \emph{split} at $e$ is a move producing the train track $(\tau', \mu')$ obtained from $(\tau, \mu)$ by splitting $e$ and inserting a new edge $e'$ in one of two possible ways depending on the weights of the neighbouring edges, see Figure \ref{fig:tt_split_move}. We use the notation $(\tau, \mu) \rightharpoonup_e (\tau', \mu')$ to denote that $(\tau', \mu')$ is obtained from $(\tau, \mu)$ by splitting $e$.
{\bf Note:}
	\begin{enumerate}
		\item The weights of all other edges are kept the same. 
		\item The train track $(\tau', \mu')$ is only well-defined up to isotopy. 
		\item We only define a split for $\mbox{max}(a, d) \neq \mbox{max}(b, c)$.
		\item The inverse move which produces $(\tau, \mu)$ from $(\tau', \mu')$ is called a \emph{fold} at $e'$.
	\end{enumerate}
\end{mydef}

\begin{mydef} Let $(\tau, \mu)$ be a measured train track on a surface $S$. A \emph{maximal split} of $(\tau, \mu)$ is a move which produces the train track $(\tau', \mu')$ obtained by splitting all of the edges of $\tau$ that have the maximum weight. This is denoted by $(\tau, \mu) \rightharpoonup (\tau', \mu')$. If $$(\tau_0, \mu_0) \rightharpoonup (\tau_1, \mu_1) \rightharpoonup \cdots \rightharpoonup (\tau_n, \mu_n)$$
is a sequence of $n \in \Z_{>0}$ maximal splits then we write $(\tau_0, \mu_0) \rightharpoonup^{n} (\tau_n, \mu_n)$. Note that in general a maximal split may split more than one edge.
\end{mydef}

\begin{figure}[H]
\centerline{\includegraphics[height=230pt]{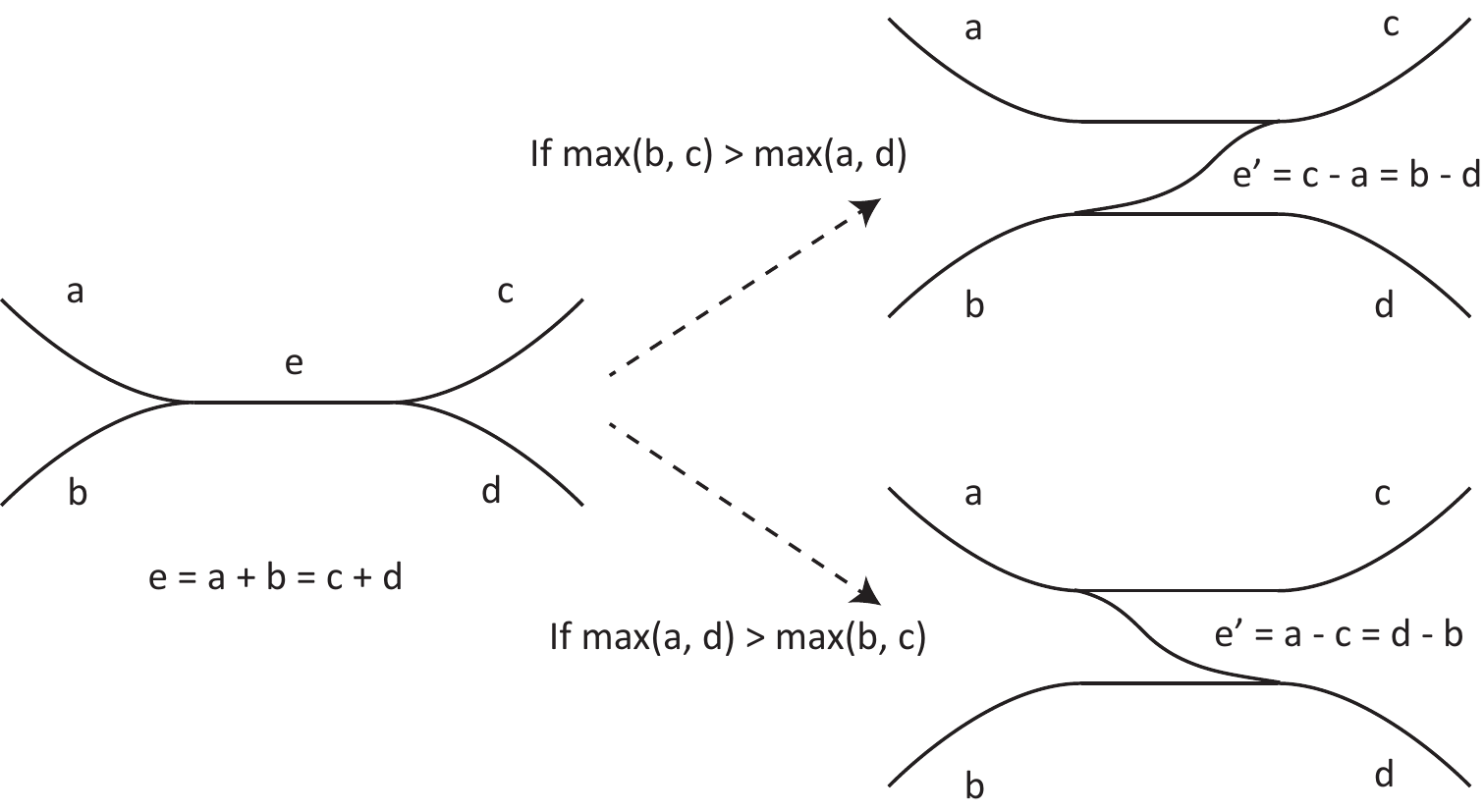}}
\caption{The two possibilities for a split of the large branch $e$. We use the label of an edge to represent its weight.}
\label{fig:tt_split_move}
\end{figure}

\begin{mydef} Let $S$ be a punctured surface. Let $(\mathcal{L}, \lambda)$ be a measured geodesic lamination on $S$, and let $(\tau, \mu)$ be a measured train track on $S$. The measured lamination $\mathcal{L}$ is \emph{suited to} $\tau$ (we also say $\tau$ is suited to $\mathcal{L}$) if the following conditions are satisfied:
\begin{enumerate}[(i)]
	\item There exists a differentiable map $f : S \rightarrow S$ homotopic to the identity such that $f(\mathcal{L}) = \tau$.
	\item $f$ is non-singular on the tangent space to leaves of $\mathcal{L}$, that is, if $v \neq 0$ is a vector tangent to a leaf of $\mathcal{L}$ then $df(v)$ is a non-zero tangent vector to $\tau$.
	\item The map $f$ respects the transverse measures, that is, if $p$ is a point in the interior of an edge $e$ of $\tau$ then $\lambda(f^{-1}(p)) = \mu(e)$.
\end{enumerate}
\end{mydef}

\begin{thm}[Theorem 3.5 of \cite{MR2866919}]\label{thm:splitting} Let $\varphi : S \rightarrow S$ be a pseudo-Anosov homeomorphism with dilatation $\lambda$ and stable measured geodesic lamination $(\mathcal{L}^s, \delta)$. If $(\mathcal{L}^s, \delta)$ is suited to the measured train track $(\tau, \mu)$ then there exist $n,m \in \Z_{>0}$ such that
$$(\tau, \mu) \rightharpoonup^n (\tau_n, \mu_n) \rightharpoonup^m (\tau_{n+m}, \mu_{n+m}),$$
and $\tau_{n+m} = \varphi(\tau_n)$ and $\mu_{n+m} = \lambda^{-1} \varphi(\mu_n)$, where if $e$ is an edge of $\tau_{n+m}$ then $\varphi(\mu_n)(e) := \mu_n(\varphi^{-1}(e))$.
\end{thm}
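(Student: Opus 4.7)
The plan is to argue that the maximal splitting sequence from $(\tau,\mu)$ is canonically determined by $(\mathcal{L}^s, \delta)$, that only finitely many combinatorial types of $\tau_i$ can appear, and that the resulting pigeonhole periodicity must be realized by $\varphi$ itself. First, I would verify by induction that each $(\tau_i, \mu_i)$ remains suited to $(\mathcal{L}^s, \delta)$ via some carrying map $f_i : \mathcal{L}^s \to \tau_i$. At any large branch of $\tau_i$, the lamination dictates which of the two possible splits occurs: locally the leaves of $\mathcal{L}^s$ pass through one specific corner of the complementary region adjacent to the branch, and that corner is recorded by whichever of $\max(a,d)$ or $\max(b,c)$ is larger. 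Minimality and unique ergodicity of the stable lamination of a pseudo-Anosov rule out the degenerate equality $\max(a,d) = \max(b,c)$, so every maximal split is well defined. The new measure $\mu_{i+1}$ is then the pullback of $\delta$ under $f_{i+1}$, and hence the entire sequence $\{(\tau_i,\mu_i)\}$ is determined by $(\mathcal{L}^s,\delta)$ together with the embedding of $\tau$ in $S$.

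Next, I would establish a finiteness result. Each maximal split removes one branch and inserts one, so every $\tau_i$ has the same number of edges $E$. Because $\mathcal{L}^s$ is filling, every $\tau_i$ fills $S$; a filling trivalent train track on $S$ is determined, up to a homeomorphism of $S$, by its abstract combinatorial structure, and for fixed $E$ there are only finitely many such combinatorial types. Pigeonhole then gives indices $n < n+m$ and a homeomorphism $\psi : S \to S$ with $\psi(\tau_n) = \tau_{n+m}$ up to isotopy. Since $\psi$ carries one train track suited to $\mathcal{L}^s$ to another, $\psi$ preserves $\mathcal{L}^s$ as an unmeasured lamination, and analogous reasoning applied to the complementary polygons of $\tau_n$ and $\tau_{n+m}$ forces $\psi$ to preserve $\mathcal{L}^u$ as well. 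The stabilizer of the pair $(\mathcal{L}^s, \mathcal{L}^u)$ in the mapping class group is virtually cyclic, generated by $\varphi$ together with a finite group of symmetries of the singular set; comparing the scaling of $\delta$ induced by $\psi$ against $\varphi_*\delta = \lambda \delta$ identifies $\psi$ with a specific power $\varphi^k$. Replacing $m$ by $mk$ (and shifting $n$ correspondingly) we may arrange $\psi$ to be isotopic to $\varphi$, and then the measure identity $\mu_{n+m} = \lambda^{-1}\varphi(\mu_n)$ follows because both sides represent the pullback of $\delta$ along $\tau_{n+m}$.

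The main obstacle is the identification of $\psi$ with $\varphi$. A raw pigeonhole argument yields only a combinatorial periodicity, with no a priori reason for the realizing symmetry of $S$ to be a power of $\varphi$ rather than some unrelated mapping class fixing the laminations, so one must invoke the classical fact that the centralizer of a pseudo-Anosov is virtually cyclic and then pin down $k$ via the measure-scaling factor. A secondary issue is ensuring $n, m \geq 1$; the inequality $m \geq 1$ follows from $\lambda > 1$, which forces the measure to strictly decrease over any full period of splits, while $n \geq 1$ requires ruling out trivial degeneracy of the starting data.
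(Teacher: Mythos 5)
The paper gives no proof of this statement; it is quoted directly as Theorem 3.5 of Agol \cite{MR2866919}. Your strategy is genuinely different from Agol's, and it contains a gap at the pigeonhole step that is not easy to repair. Agol's actual argument is much more direct: since $\varphi(\mathcal{L}^s,\delta)=(\mathcal{L}^s,\lambda\delta)$, the measured train track $\varphi(\tau,\lambda^{-1}\mu)$ is also suited to $(\mathcal{L}^s,\delta)$; by the common-splitting lemma (stated in this paper as Lemma~\ref{lem:comsplit}, Corollary~3.4 of Agol), $(\tau,\mu)$ and $\varphi(\tau,\lambda^{-1}\mu)$ eventually split to a common measured track. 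Because maximal splitting is deterministic and commutes with applying a homeomorphism and with rescaling the measure, the splitting sequence of $\varphi(\tau,\lambda^{-1}\mu)$ is exactly $\{\varphi(\tau_i,\lambda^{-1}\mu_i)\}_{i\ge 0}$, so the common track gives $\varphi(\tau_p,\lambda^{-1}\mu_p)=(\tau_n,\mu_n)$ for some $n>p\ge 0$, which is the desired periodicity. No analysis of the stabilizer of the invariant laminations is needed.

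Your pigeonhole only equates $\tau_n$ and $\tau_{n+m}$ as \emph{unmeasured} combinatorial types --- the measures $\mu_i$ vary continuously and cannot be pigeonholed --- and the homeomorphism $\psi$ furnished by Proposition~\ref{prop:ceqdiff} therefore respects only the underlying tracks, not the carrying of $\mathcal{L}^s$. The assertion ``Since $\psi$ carries one train track suited to $\mathcal{L}^s$ to another, $\psi$ preserves $\mathcal{L}^s$'' does not follow: a filling train track carries a whole cone of measured laminations, so $\psi(\mathcal{L}^s)$ is merely some lamination carried by $\tau_{n+m}$ and need not equal $\mathcal{L}^s$. To conclude that $\psi$ fixes $\mathcal{L}^s$ you would have to know that $\psi$ also matches the transverse measures up to a positive scalar, which is essentially the periodicity claim itself. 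Even granting this, the stabilizer of the pair $(\mathcal{L}^s,\mathcal{L}^u)$ in $\mbox{MCG}(S)$ is only virtually cyclic, so $\psi$ would have the form $\phi\circ\varphi^k$ with $\phi$ of finite order; your dilatation comparison pins down $k$ but not $\phi$, and one would still need to pass to a multiple of the period to absorb $\phi$, a step your outline does not address. The common-splitting lemma is precisely the device Agol uses to avoid all of this, and it is already available to you in the paper's Lemma~\ref{lem:comsplit}.
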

We call the sequence 
$$(\tau_n, \mu_n) \rightharpoonup \cdots \rightharpoonup (\tau_{n+m}, \mu_{n+m}) \rightharpoonup \cdots, $$
of train tracks in Theorem \ref{thm:splitting} a \emph{periodic splitting sequence}, with respect to $\varphi$, as it is periodic modulo the action of the monodromy and scaling by the dilatation.

We briefly describe Agol's construction of the layered veering triangulation; see also Step 4 of Section \ref{sec:outline} and \cite{MR2866919} for details. With the notation as in Theorem \ref{thm:splitting}, let $\varphi^\circ : S^\circ \rightarrow S^\circ$ be the restriction of $\varphi$ to the surface $S^\circ$ obtained by puncturing $S$ at the singular points of the invariant foliations of $\varphi$. Then the complementary regions of each train track in the periodic splitting sequence are homeomorphic to once-punctured disks. There is an ideal triangulation $T_i$ of $S^\circ$ dual to the train track $\tau_i$, so that edges of $T_i$ are in bijection with branches of $\tau_i$. A split of a train track corresponds to a diagonal exchange of the dual triangulation. Hence, a maximal split corresponds to a sequence of diagonal exchanges interpolating between $T_i$ and $T_{i+1}$ (the veering triangulation is independent of the order of the diagonal exchanges.) Starting with the triangulation $T_n$ of $S^\circ$ we attach a tetrahedron for each diagonal exchange interpolating between the two triangulations. Finally, since $\tau_{n+m} = \varphi^\circ(\tau_n)$ (ignoring measures) we have $T_{n+m} = \varphi^\circ(T_n)$, so we can glue $T_n$ to $T_{n+m}$ to construct a veering triangulation of the mapping torus $M_{\varphi^\circ}$.

We now define veering, a combinatorial condition satisfied by the ideal triangulations produced by Agol's construction.
\begin{mydef}[Taut angle structure]\label{taut_angle_structure}
An {\bf angle-taut tetrahedron} is an ideal tetrahedron equipped with an assignment of angles taken from $\{0,\pi\}$ to its edges so that two opposite edges are assigned $\pi$ and the other four are assigned $0$. A {\bf taut angle structure} on $M$ is an assignment of angles taken from $\{0,\pi\}$ to the edges of each tetrahedron in $M,$ such that every tetrahedron is angle-taut and the sum of all angles around each edge in $M$ is $2\pi.$
\end{mydef}

\begin{mydef}[Taut structure]\label{taut_structure}
A {\bf taut tetrahedron} is a tetrahedron with a coorientation assigned to each face, such that precisely two faces are cooriented into the tetrahedron, and precisely two are cooriented outwards. Each edge of a taut tetrahedron is assigned an angle of either $\pi$ if the coorientations on the adjacent faces agree, or $0$ if they disagree. See Figure \ref{taut_ideal}(a) for the only possible configuration (up to symmetry). Then $\mathcal{T}$ is a {\bf taut ideal triangulation} of $M$ if there is a coorientation assigned to each ideal triangle, such that every ideal tetrahedron is taut, and the sum of all angles around each edge in $M$ is $2\pi$ (see Figure \ref{taut_ideal}(b)). This will also be called a {\bf taut structure} on $M.$
\end{mydef}

\begin{figure}[ht!]
\labellist
\pinlabel (a) at 60 220
\pinlabel (b) at 465 220
\small
\pinlabel 0 at 213 50
\pinlabel 0 at 208 238
\pinlabel 0 at 316 137
\pinlabel 0 at 99 155
\pinlabel $\pi$ at 200 100
\pinlabel $\pi$ at 203 160
\endlabellist
\centering
\includegraphics[width=0.7\textwidth]{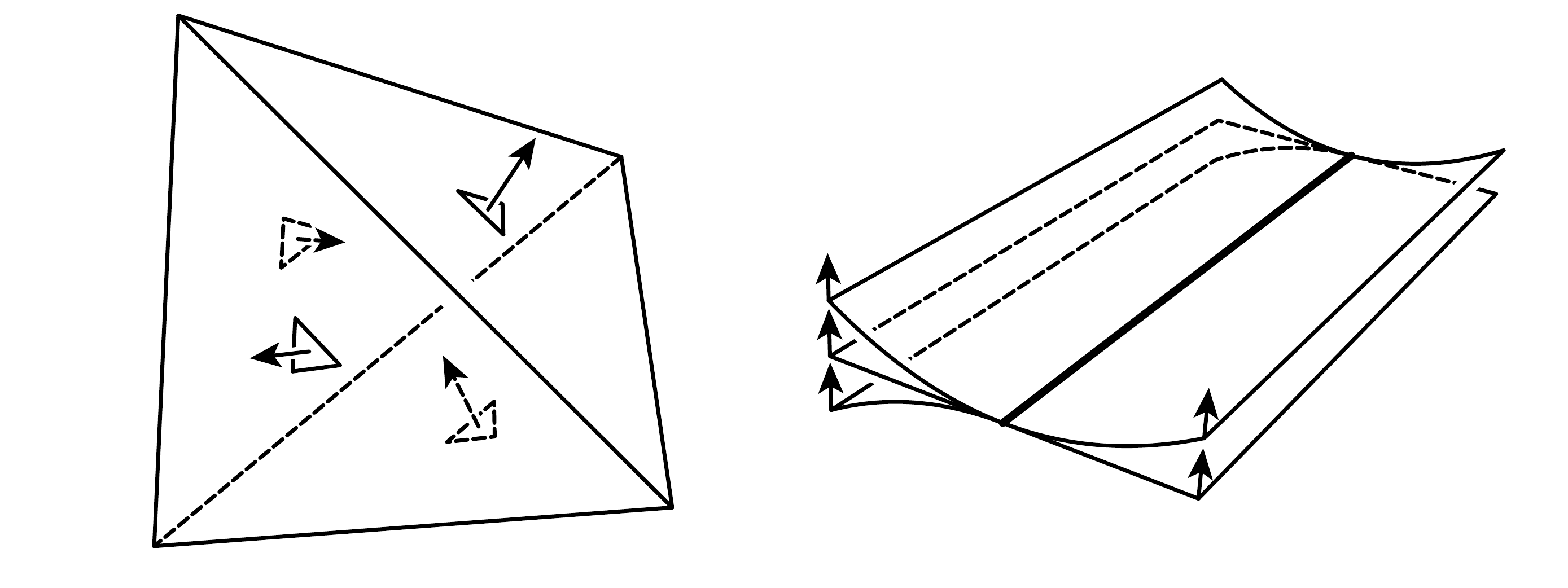}
\caption{Conditions for a taut ideal triangulation.}
\label{taut_ideal}
\end{figure}

A taut ideal triangulation comes with a compatible taut angle structure, but not every taut angle structure arises from a taut structure.

Let $\Delta^3$ be the standard 3--simplex with a chosen orientation. Suppose the edges of $\Delta^3$ are labelled by $e,$ $e'$ and $e'',$ such that opposite edges have the same label and all three labels occur. Then the cyclic order of $e,$ $e'$ and $e''$ viewed from each vertex depends only on the orientation of the 3--simplex, i.e.\thinspace is independent of the choice of vertex. It follows that, up to orientation preserving symmetries, there are two possible labellings, and we fix one of these labellings as shown in Figure~\ref{veering_on_tetrahedron_d}.

\begin{mydef}[Veering triangulation]\label{veering_defn}
A {\bf veering tetrahedron} is an oriented angle-taut tetrahedron, where each edge with angle $0$ is coloured either red or blue (drawn dotted and dashed respectively), such that the cyclic order of the edges at each vertex takes the $\pi$ angle edge to a blue edge to a red edge. This is shown in Figure \ref{veering_on_tetrahedron_d}. We refer to the \underline{r}ed edges as {\bf right-veering} and the b\underline{l}ue edges as {\bf left-veering}. Colours assigned to the $\pi$ angle edges are irrelevant to the definition of a veering tetrahedron. A triangulation $\mathcal{T}$ with a taut angle structure is a {\bf veering triangulation} of $M$ if there is a colour assigned to each edge in the triangulation so that every tetrahedron is veering. 
\end{mydef}

\begin{figure}[htb]
\labellist
\small
\pinlabel $e$ at 52 61
\pinlabel $e'$ at 52 95
\pinlabel $e''$ at 10 50
\endlabellist

\centering
\includegraphics[width=0.2\textwidth]{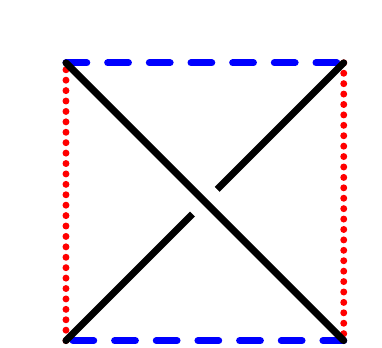}
\caption{The canonical picture of a veering tetrahedron. The $0$ angles are at the four sides of the square, and the $\pi$ angles are the diagonals. We indicate the veering directions on the $0$ angle edges of a tetrahedron by colouring the edges. Note that this picture depends on a choice of orientation for the tetrahedron.}
\label{veering_on_tetrahedron_d}
\end{figure}

The definition of a veering triangulation given above matches the definition given in \cite{MR2860987}. This is slightly more general than the definition given by Agol, which requires that the angle-taut structure on the triangulation is promoted to a taut structure.

\section{Implementation}\label{chapter:implementation}
In this section we discuss a computer program we developed to construct examples of veering triangulations coming from Agol's construction.

Let $S$ be a surface of genus $g \ge 0$ with $p > 0$ punctures, where we label the punctures by integers $1,2,\ldots,p$. Let $\varphi : S \rightarrow S$ be a homeomorphism permuting the punctures, given by a composition $$\varphi = T_{n} \circ \cdots \circ T_{2} \circ T_{1},$$
where each of $T_1, \ldots, T_n$ is either a left or right Dehn twist in one of the curves shown in Figure \ref{fig:twist_curves}, or a half-twist permuting adjacent punctures $i$ and $i+1$, where $i \in \{1,2,\ldots,p-1\}$ (see Figure \ref{fig:half_twist}). The Dehn twists in curves shown in Figure \ref{fig:twist_curves} and half-twists in adjacent punctures generate the mapping class group of $S$ \cite{MR2850125}.

\begin{figure}[H]
\centerline{\includegraphics[width=\textwidth]{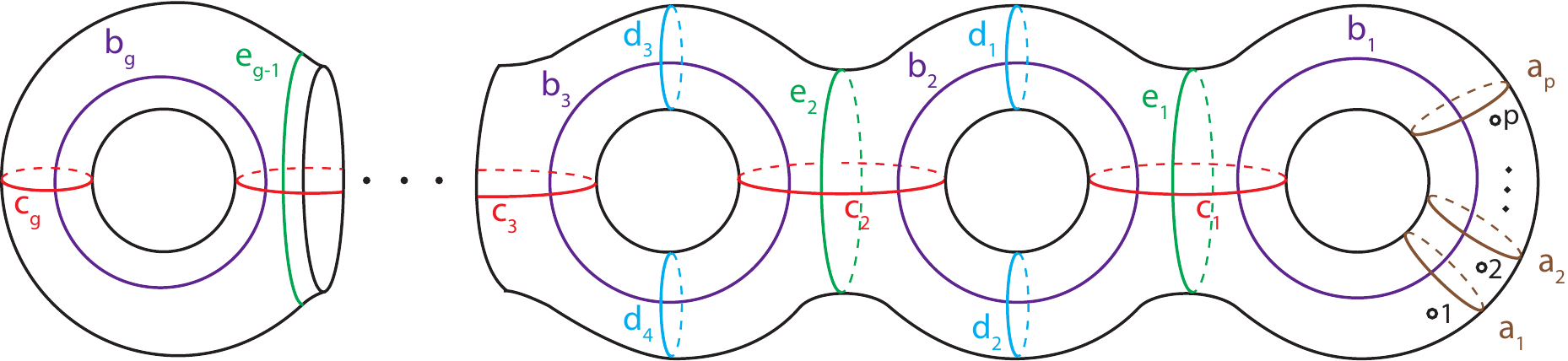}}
\caption{The orientation of $S$ is given by an outward normal vector field and the right hand rule. We consider a positive Dehn twist in a curve shown to be a left twist, that is, we twist to the left as we approach the curve from either side.}
\label{fig:twist_curves}
\end{figure}

\begin{figure}[H]
\centerline{\includegraphics[height=120pt]{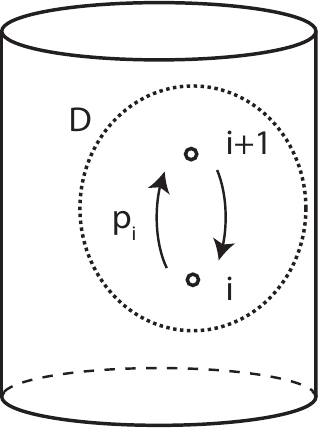}}
\caption{A positive (clockwise) half twist $p_i : S \rightarrow S$ in punctures $i$ and $i+1$, supported in the disk $D$.}
\label{fig:half_twist}
\end{figure}

Let $G \subseteq S$ be a graph homotopy equivalent to $S$. Then $\varphi$ induces a homotopy equivalence $\mathfrak{g} : G \rightarrow G$ and conversely the isotopy class of $\varphi$ is uniquely determined by $\mathfrak{g}$. By a homotopy if necessary, we may assume that $\mathfrak{g}$ is a \emph{graph map}, that is, $\mathfrak{g}$ maps vertices to vertices and each oriented edge to an \emph{edge path}, which is an oriented path $e_1e_2\cdots e_k$, where $e_1,\ldots,e_k$, $k \ge 0$, are oriented edges and the terminal vertex of $e_i$ is equal to the initial vertex of $e_{i+1}$ for $i \in \{1,2,\ldots,k-1\}$. As the graph map $\mathfrak{g}$ is induced by $\varphi$, we say that $\mathfrak{g}$ is a graph map \emph{representing} $\varphi$. We compute such a graph map $\mathfrak{g}$ representing $\varphi$; this is described in more detail in Appendix \ref{sec:comp_graph_map}.

 The graph map produced is input into the computer program Trains \cite{Trains}, written by Toby Hall, which is a software implementation of the Bestvina-Handel algorithm for punctured surfaces (see \cite{BH}.) The Bestvina-Handel algorithm will determine whether or not the homeomorphism represented by the graph map is isotopic to a pseudo-Anosov homeomorphism, and in the case that it is, it will produce a train track $\tau$ suited to the stable geodesic lamination. The program Trains provides the following combinatorial data of the train track:
	\begin{enumerate}
		\item The fat graph structure of $\tau$, i.e. the graph structure together with a cyclic order of incident edges around each vertex respecting the orientation of the surface.
		\item The smoothing at each switch, i.e. the edges on each side of the tangent space.
		\item A cycle of oriented edges of $\tau$ representing a loop on the boundary of each complementary region of $\tau \subset S$ containing a puncture.
	\end{enumerate}

Since each complementary region is a disk or punctured disk, from the above combinatorial data alone we can reconstruct a surface $\Sigma$ diffeomorphic to $S$, with embedded train track $\tau_0$, so that $(\Sigma, \tau_0)$ is diffeomorphic to $(S, \tau)$. The above combinatorial data determines the embedding of the train track in the surface only up to a diffeomorphism of the surface fixing each puncture. That is, if two train tracks $\tau'$ and $\tau''$ on a surface $S$ have isomorphic combinatorial data then there exists a diffeomorphism $h : S \rightarrow S$ pointwise fixing the punctures of $S$, such that $h(\tau') = \tau''$ (see Proposition \ref{prop:ceqdiff} of Section \ref{combeqv}.)

Let $\tau$ be a train track suited to the stable geodesic lamination of $\varphi$, as given by the Bestvina-Handel algorithm. Let $M_{\varphi^{\circ}}$ be the mapping torus of $\varphi^{\circ}$, where $\varphi^{\circ} : S^\circ \rightarrow S^\circ$ is the restriction of $\varphi$ to the surface $S^\circ$ obtained by puncturing $S$ at the singular points of the invariant foliations. As in the Bestvina-Handel algorithm, Trains also outputs a $C^1$ graph map $\mathfrak{g} : \tau \rightarrow \tau$ representing $\varphi^\circ$, with the additional property that the measure on $\tau$ can be computed from $\mathfrak{g}$ (see Step 1 below.) We now outline the steps taken to algorithmically build the veering triangulation of $M_{\varphi^{\circ}}$, given only the combinatorial data of $\tau$ and the map $\mathfrak{g}$, as provided by Trains. Following the outline, we will describe Steps $2$ and $3$ in more detail.

\subsection{Algorithm outline}\label{sec:outline}
		\begin{enumerate}
			\item[Step 1:]{\bf Compute the tranverse measure.} Use the map $\mathfrak{g}$ given by Trains to compute the dilatation $\lambda > 1$ and weights of branches of $\tau$ to some specified arbitrary precision, so that $\tau$ is now a measured train track suited to the stable geodesic lamination of $\varphi^{\circ}$. More precisely, let $e_1, e_2, \ldots, e_m$ be the set of edges of $\tau$, with each edge equipped with an arbitrarily chosen orientation. Let $M$ be the $m \times m$ matrix given by setting $M_{ij}$ to be the number of times either $e_j$ or $\overline{e}_j$ ($e_j$ with its orientation reversed) appears in the edge path $\mathfrak{g}(e_i)$. The dilatation $\lambda$ is the largest real eigenvalue of $M$. Let $v$ be an eigenvector of $M$ with strictly positive entries which spans the one dimensional eigenspace corresponding to $\lambda$. The transverse measure on $\tau$ is given by assigning the $i$th component of $v$ to be the weight of $e_i$, for $i \in \{1,2,\ldots,m\}$. The transverse measure is well defined up to scaling. See \cite{BH} for details.
			\item[Step 2:]{\bf Modify $\tau$ so that it is a trivalent train track (and modify $\mathfrak{g}$ appropriately.)} Agol's construction begins with a trivalent train track suited to the stable geodesic lamination $\mathcal{L}^s$ of $\varphi$, however those produced by the Bestvina-Handel algorithm are generally not trivalent. We modify $\tau$ by applying a `combing' procedure at each switch of $\tau$ with degree greater than $3$ and removing degree $2$ vertices to produce a trivalent measured train track suited to $\mathcal{L}^s$, which we continue to denote by $\tau$.
			\item[Step 3:]{\bf Detect periodicity of splitting sequence of train tracks.} In this step we perform maximal splits starting with $\tau$ to obtain a splitting sequence of train tracks until we determine that the sequence becomes periodic, that is $\varphi^\circ(\lambda^{-1}\tau_i) = \tau_j$ for some $i < j$. Periodicity requires us to check that $\varphi^\circ(\lambda^{-1}\tau_i)$ and $\tau_j$ are identical up to (ambient) isotopy in $S^\circ$. However, since we do not keep track of embeddings of train tracks in $S^\circ$ and therefore cannot directly determine whether two train tracks are isotopic in $S^\circ$, we instead use an indirect approach to detect periodicity. We introduce the notion of combinatorial equivalence of two given measured train tracks on the same surface, which is essentially equivalent to saying that the two tracks have isomorphic combinatorial data. Combinatorial equivalence is a necessary but insufficient condition for two train tracks to be isotopic but can be checked for algorithmically. Moreover, given a combinatorial equivalence between the two train tracks $\lambda^{-1}\tau_i$ and $\tau_j$, there exists a diffeomorphism $h : S^\circ \rightarrow S^\circ$ such that $h(\lambda^{-1}\tau_i) = \tau_j$ which realises the combinatorial equivalence. Then $h$ and $\varphi^\circ$ are isotopic in $S^\circ$ if and only if the induced maps $h_*, \varphi^\circ_* : \pi_1(S^\circ) \rightarrow \pi_1(S^\circ)$ are equal up to an inner automorphism. The map $\varphi^\circ_*$ can be computed from the map $\mathfrak{g}$ given by Trains, and $h_*$ is determined by the combinatorial equivalence alone. If $h$ and $\varphi^\circ$ are determined to be isotopic then $\varphi^\circ(\lambda^{-1}\tau_i) = h(\lambda^{-1}\tau_i) = \tau_j$ so that the sequence is periodic as required.
			\item[Step 4:]{\bf Use the splitting sequence of train tracks to construct the veering triangulation.} We briefly describe the construction of the layered triangulation from the periodic splitting sequence of train tracks and combinatorial equivalence found in Step $3$, see \cite[\S 4]{MR2866919} for details. Assume we have found that $\varphi^\circ(\lambda^{-1}\tau_n) = \tau_{n+m}$, $n,m\in\Z_{>0}$. Recall that we add punctures to $S$ at the singular points of the transverse measured foliations of $\varphi$ to obtain a surface $S^\circ$ so that each complementary region (in $S^\circ$) of a train track in our splitting sequence is homeomorphic to a punctured disk. For a given train track there is a dual ideal triangulation of the surface $S^\circ$ so that edges of ideal triangles are in bijection with branches of the train track. A maximal split $\tau_i \rightharpoonup \tau_{i+1}$ corresponds to a sequence of diagonal exchanges, and for each diagonal exchange an ideal tetrahedron is attached.
			
			Finally, we need to specify how the triangulation $T_n$ of $S^\circ$ dual to $\tau_n$ glues to the triangulation $T_{n+m}$ of $S^\circ$ dual to $\tau_{n+m}$, which is required in order to completely determine the tetrahedron face pairings. The combinatorial equivalence between $\lambda^{-1}\tau_n$ and $\tau_{n+m}$ from Step 3 induces a bijection between the edges of $\tau_n$ and $\tau_{n+m}$, which in turn induces a gluing of the ideal triangles in $T_n$ to those in $T_{n+m}$, as required.
	  \end{enumerate}
			
	\subsection{Step 2}\label{subsec:step2} We now describe Step 2 in more detail. Let $v$ be a switch of $\tau$ with degree greater than $3$. Let $e_1, e_2$ be consecutive edges incident to $v$ on the same side of the tangent space of $\tau$ at $v$, and let $U \subseteq S$ be a small closed disk such that $v \in \mbox{Int}(U)$ (see Figure \ref{fig:combing}.) Identify the arcs $e_1 \cap U$ and $e_2 \cap U$ to produce a train track, obtained from $\tau$ by \emph{combing}. By repeatedly combing we can obtain a trivalent train track, which we will denote by $\tau'$. The new edge is assigned a weight equal to the sum of the weights of edges $e_1$ and $e_2$. By Proposition \ref{combing} below, $\mathcal{L}$ is suited to $\tau'$. (Further discussion of combing can be found on page 40 of \S 1.4 of \cite{PENNER}.) 
	
	Assume first that $\tau'$ is obtained from $\tau$ by combing a single edge as in Figure \ref{fig:combing}. We modify the graph map $\mathfrak{g}$ to obtain a graph map $\tau' \rightarrow \tau'$ which we will continue to denote by $\mathfrak{g}$. This graph map will be used to detect periodicity in the splitting sequence of train tracks in Step 4. Let $E$ be the new edge introduced by combing which we orient away from $v$. Assume that the other edges of $\tau'$ are labelled the same as those of $\tau$ and that $e_1, e_2$ are oriented away from the vertex common with $E$ in $\tau'$ as in Figure \ref{fig:combing}. We think of the old edge $e_i$ as now being $E e_i$, $i \in {1,2}$. Set $\mathfrak{g}(E)$ to be the empty edge path. For each remaining edge $e$, and for each $e_i \in \{e_1,e_2\}$ modify $\mathfrak{g}(e)$ by replacing each occurrence of $e_i$ in $\mathfrak{g}(e)$ with $E e_i$ and each occurrence of $\overline{e}_i$ with $\overline{e}_i \overline{E}$. If $\tau'$ is obtained by combing more than one edge of $\tau$ then each time we comb an edge we update $\mathfrak{g}$ as described. The resulting map $\mathfrak{g}$ is a graph map representing $\varphi^\circ$.
	
	Assume that $v'$ is a degree $2$ vertex of $\tau$ with incident edges $e_3$ and $e_4$, oriented away from $v'$. We declare the vertex to be removed from $\tau$ so that the union of $e_3$ and $e_4$ form a new edge $E = \overline{e_3}e_4$. Then we modify the graph map $\mathfrak{g}$ so that $E$ maps to $\mathfrak{g}(\overline{e_3}e_4)$, and we remove all occurrences of $e_3$ and $\overline{e}_3$ in the image of each edge. The edge $e_3$ is thought of as contracted into a vertex and $e_4$ lengthened to form $E$.
			
			\begin{figure}[H]
			\centerline{\includegraphics[height=135pt]{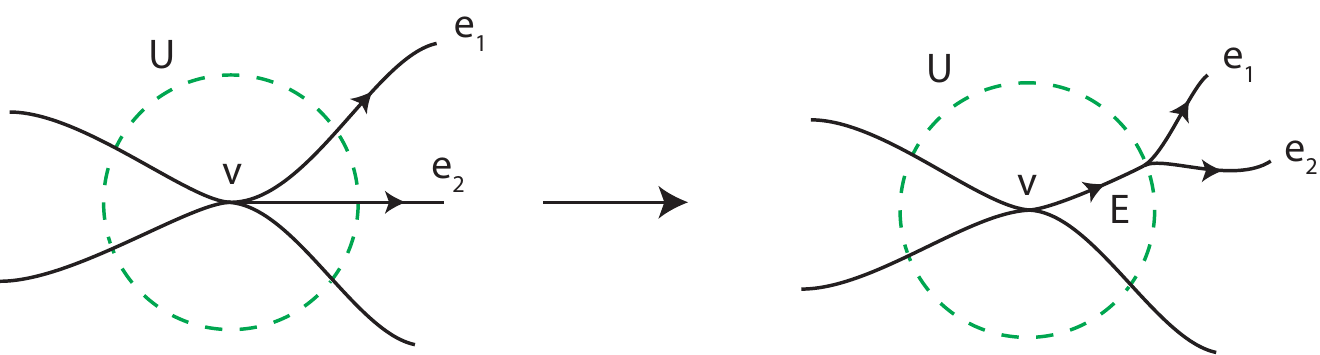}}
			\caption{Combing a train track}
			\label{fig:combing}
			\end{figure}
			
			\begin{prop}\label{combing} Let $\mathcal{L}$ be a lamination suited to a train track $\tau' \subseteq S$. Let $\tau''$ be obtained from $\tau'$ by combing the edges $e_1, e_2$ of $\tau'$ with respect to the closed disk $U \subseteq S$. Then $\mathcal{L}$ is suited to $\tau''$.
			\end{prop}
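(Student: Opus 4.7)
My plan is to exhibit the required map by post-composing the given map $f : S \to S$ witnessing that $\mathcal{L}$ is suited to $\tau'$ with a local \emph{bigon-collapse} map $g : S \to S$ supported in a small neighborhood of $U$, setting $f'' := g \circ f$, and then checking conditions (i)--(iii) of the definition of ``suited'' for the pair $(\mathcal{L}, \tau'')$. All of the work goes into the construction of $g$; once $g$ is in place the verifications are essentially immediate.

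To construct $g$: the initial segments of $e_1$ and $e_2$ emanating from $v$ inside $U$, together with a short transverse arc $\alpha$ in $U$ joining them, bound a closed bigon $R$ with $v$ as one corner. Parametrize $R$ as a rectangle so that $e_1 \cap R$ and $e_2 \cap R$ are the two ``horizontal'' sides and $v$ is the ``vertical'' side collapsed to a point. Define $g$ on $R$ to be the vertical retraction onto the horizontal midline; this midline will be the new edge $E \cap U$. Smoothly interpolate $g$ to the identity on a slightly larger disk, arranging that the arcs of $e_1, e_2$ just past $\alpha$ are pushed together to meet the midline at the new trivalent vertex of $\tau''$, and let $g$ be the identity elsewhere. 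By construction $g$ is smooth, satisfies $g(\tau') = \tau''$ as subsets of $S$, and is homotopic to $\mathrm{id}_S$ through maps supported in a disk.

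With this $g$ in hand, condition (i) is automatic: $f''$ is smooth and homotopic to the identity (both $f$ and $g$ are), and $f''(\mathcal{L}) = g(\tau') = \tau''$. Condition (ii) holds because $dg$ does not annihilate tangents to $\tau'$: each of $e_1 \cap R$ and $e_2 \cap R$ is mapped diffeomorphically onto the midline, so if $0 \neq v$ is tangent to a leaf of $\mathcal{L}$ then $df(v)$ is a nonzero vector tangent to $\tau'$ and $df''(v) = dg(df(v)) \neq 0$. Condition (iii) is the only place where the combing weight convention enters. For $p$ in the interior of an edge of $\tau''$ other than $E$, $g^{-1}(p) = \{p\}$ and the measure condition for $f$ transfers directly. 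For $p$ in the interior of $E$, the fibre $g^{-1}(p)$ is a vertical arc of $R$ meeting $\tau'$ in exactly one point $q_1 \in e_1$ and one point $q_2 \in e_2$, so $(f'')^{-1}(p) \cap \mathcal{L} = f^{-1}(q_1) \sqcup f^{-1}(q_2)$ has transverse measure $\mu'(e_1) + \mu'(e_2)$, matching the weight assigned to $E$ by the combing rule.

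The main obstacle is purely bookkeeping: $g$ must be defined smoothly near $\alpha$ so that simultaneously the collapse is injective on tangent vectors to $e_1$ and $e_2$, the preimage of each point of $E$ meets $\tau'$ in one point of $e_1$ and one point of $e_2$, and the new edge $E$ joins the continuations of $e_1, e_2$ smoothly at the new trivalent vertex of $\tau''$. Once $g$ is arranged in the rectangular normal form above and interpolated to the identity in a controlled way outside $R$, all three conditions for $\mathcal{L}$ to be suited to $\tau''$ follow from the corresponding properties of $f$.
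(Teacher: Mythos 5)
Your proof is correct and takes essentially the same approach as the paper: post-compose the map $f$ witnessing that $\mathcal{L}$ is suited to $\tau'$ with a differentiable folding/collapse map supported near $U$, and check that the composition satisfies the three conditions of the definition. You simply spell out the construction of the folding map and the verification of conditions (i)--(iii) in more detail than the paper does.
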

			
			\begin{proof}
				By definition since $\mathcal{L}$ is suited to $\tau'$ there exists a differentiable map $f : S \rightarrow S$ homotopic to the identity such that $f(\mathcal{L}) = \tau'$ and $f$ is non-singular on the tangent space to leaves of $\mathcal{L}$. Let $h : S \rightarrow S$ be a differentiable map homotopic to the identity which, intuitively, folds together $e_1 \cap U$ and $e_2 \cap U$. Then $h \circ f : S \rightarrow S$ is a differentiable map homotopic to the identity such that $h\circ f(\mathcal{L}) = \tau''$ and is non-singular on the tangent space to leaves of $\mathcal{L}$. The measure is also appropriately preserved by $h \circ f$.
			\end{proof}

			\subsection{Step 3}\label{combeqv} We denote by $\varphi^\circ : S^\circ \rightarrow S^\circ$ the restriction of $\varphi$ to the surface $S^\circ$ obtained by puncturing $S$ at the singular points of the invariant foliations. In this section we outline an effective procedure to detect the periodic splitting sequence associated with $\varphi^\circ$, given only $\tau$ and $\mathfrak{g} : \tau \rightarrow \tau$ as produced by Trains. 
			
			\subsubsection{Combinatorial equivalence of train tracks.}
			Recall that we would like to perform maximal splits beginning with the train track $\tau$, producing a sequence of measured train tracks 
			$$\tau \rightharpoonup \tau_1 \rightharpoonup \cdots \rightharpoonup \tau_n \rightharpoonup \cdots \rightharpoonup \tau_{n+m} \rightharpoonup \cdots, \quad n,m\in\Z_{>0},$$
			 until we find that the sequence becomes periodic, that is, $\lambda^{-1}\varphi^\circ(\tau_n) = \tau_{n+m}$ for some $n,m \in \Z_{>0}$.
			 However, we do not keep track of the embedding of train tracks in the surface $S$ so we can not directly compare train tracks up to isotopy in $S$. Despite this, we can determine if two measured train tracks $\tau', \tau''$ are \emph{combinatorially equivalent}, that is, there is a bijection between their vertices and oriented edges which:
				\begin{enumerate}[(i)]
					\item induces an isomorphism of the combinatorial graph structure of the train tracks,
					\item preserves the clockwise cyclic order of oriented edges around each switch,
					\item preserves the smoothing at each switch (i.e. the list of edges on each side of the tangent space),
					\item maps edges to edges of the same weight, and
					\item preserves complementary regions, that is, if $p$ is a puncture of $S$, then the clockwise sequence of edges which form the boundary of the complementary region of $\tau$ containing $p$ are mapped by the bijection to the clockwise sequence of edges which form the boundary of the complementary region of $\tau'$ containing $p$.
				\end{enumerate}
			See \cite[\S 3.15]{Mosher2} for further discussion on combinatorial equivalence (note however, that we require the additional property that a combinatorial equivalence respects weights of edges.) We write $\tau' \sim \tau''$ to denote combinatorial equivalence. A combinatorial equivalence between two train tracks $\tau'$ and $\tau''$ induces a $C^1$-diffeomorphism $\tau' \rightarrow \tau''$ respecting the bijection of edges and vertices. In fact we have the following proposition.
			
				\begin{prop}\label{prop:ceqdiff} Let $\tau', \tau''$ be combinatorially equivalent measured train tracks on a punctured surface $S$ each of which fill the surface, that is, the complementary regions of each of the train tracks are homeomorphic to disks or once-punctured disks. Then there exists a diffeomorphism $\phi : S \rightarrow S$ such that $\phi(\tau') = \tau''$ which induces the combinatorial equivalence.
				\end{prop}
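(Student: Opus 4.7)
The plan is to build $\phi$ first on a regular neighbourhood of $\tau'$ and then extend it across each complementary region. Fix closed regular neighbourhoods $N'$ of $\tau'$ and $N''$ of $\tau''$ in $S$, each formed by small disk neighbourhoods of the switches joined by thin rectangular tubular neighbourhoods of the branches. Using conditions (i)--(iii) of combinatorial equivalence, I would build a $C^1$ diffeomorphism $\phi_0 \colon N' \to N''$ switch-by-switch and branch-by-branch: at each pair of corresponding switches $v' \leftrightarrow v''$, a local diffeomorphism between disk neighbourhoods exists because a smoothed switch has a standard local model determined up to diffeomorphism by the cyclic order of incident oriented edges together with the two-sided smoothing data. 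These local diffeomorphisms then extend across the rectangular tubes of corresponding branches in the obvious way, producing $\phi_0$ sending $\tau'$ onto $\tau''$ and realising the edge/vertex bijection. Since the combinatorial equivalence matches weights (condition (iv)), $\phi_0|_{\tau'}$ automatically respects the transverse measures.

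Next I would extend $\phi_0$ over the components of $\overline{S \setminus N'}$. Because $\tau'$ and $\tau''$ fill $S$, each such component is a closed disk or a closed once-punctured disk, and likewise for $\overline{S \setminus N''}$. Condition (v), combined with the cyclic-order condition (ii), ensures that the bijection on oriented edges induces a bijection between the complementary regions of $\tau'$ and of $\tau''$ of the same topological type, whose boundary cycles of arcs match up in cyclic order and whose punctures correspond. Consequently, for each complementary region $R'$, the restriction $\phi_0|_{\partial R'} \colon \partial R' \to \partial R''$ is an orientation-preserving diffeomorphism onto the boundary of the matched region $R''$. For an unpunctured disk, this boundary diffeomorphism extends to a diffeomorphism $R' \to R''$ by a smoothed Alexander trick. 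For a once-punctured disk, after identifying both regions with a standard model I would isotope $\phi_0|_{\partial R'}$ to a boundary rotation and extend by a radial formula sending puncture to puncture. Glueing these extensions to $\phi_0$ yields the desired diffeomorphism $\phi \colon S \to S$.

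The main obstacle is the bookkeeping that shows $\phi_0|_{\partial N'}$ actually takes each boundary cycle of a complementary region of $N'$ to the corresponding boundary cycle of the matched region of $N''$ in the correct cyclic order (and with punctures paired correctly); without this, the extension step collapses. This is exactly what hypothesis (v) is designed to provide, so with that condition in hand the remaining analytic content (smoothness of the gluing, orientation-preservation, and matching of the puncture data in the once-punctured disk case) is standard surface topology.
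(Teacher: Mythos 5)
Your proof is essentially correct and takes the standard constructive route. The paper itself does not prove Proposition~\ref{prop:ceqdiff}; its ``proof'' is only a citation of Mosher's Proposition~3.15.1, and your argument --- build $\phi_0$ on regular neighbourhoods switch by switch and branch by branch, then extend over the complementary disk and once-punctured-disk components by the Alexander trick --- is precisely the sort of proof that reference supplies. One small refinement is worth making: conditions (i)--(iii) already force $\phi_0|_{\partial N'}$ to carry each boundary cycle of a complementary region of $\tau'$ to the boundary cycle of \emph{some} complementary region of $\tau''$ in the correct cyclic order, since the boundary-cycle data is recoverable from the fat-graph structure together with the smoothing. What condition (v) genuinely adds is that the region containing a given puncture $p$ is matched to the region of $\tau''$ containing the \emph{same} puncture $p$, so that the once-punctured-disk extensions can be chosen to send puncture to puncture, as is needed for $\phi$ to realise the full combinatorial equivalence rather than some other puncture-permuting identification.
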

				See \cite[Proposition 3.15.1]{Mosher2}.

Note that train tracks output by the Bestvina-Handel algorithm always fill the surface, and performing a maximal split on a filling train track results in a filling train track.

\subsubsection{Compute graph maps $\mathfrak{g}_i : \tau_i \rightarrow \tau_i$ representing $\varphi^\circ$, for $i \in \Z_{\ge 0}$.}

Singular points of the invariant foliations of $\varphi$ are in bijection with the complementary regions of $\tau \subset S$. Thus, in $S^\circ$ all complementary regions of $\tau$ (and $\tau_i$ for $i\in\Z_{>0}$) are punctured disks. For $i \in \Z_{>0}$, the train track $\tau_i$ is homotopy equivalent to $S^\circ$ and $\varphi^\circ$ induces a homotopy equivalence $\tau_i \rightarrow \tau_i$. Each time we perform a maximal split we compute such a homotopy equivalence, in fact, a graph map $\mathfrak{g}_i : \tau_i \rightarrow \tau_i$ representing $\varphi^\circ$, which is later used to compute the map $\varphi^\circ_* : \pi_1(S^\circ) \rightarrow \pi_1(S^\circ)$ mentioned in the outline in Section \ref{sec:outline}. This is done inductively, starting with the graph map $\mathfrak{g}_0 := \mathfrak{g}$ computed in Step 3, which was obtained by modifying the graph map produced by the Bestvina-Handel algorithm. Let $i \in \Z_{\ge 0}$ and assume that $\tau_{i+1}$ is obtained from $\tau_i$ by splitting a single large branch $E$, as shown in Figure \ref{fig:tt_split_isotopy}. The edge $e_1$ of $\tau_{i+1}$ can be thought of as represented by $\overline{E}e_1$ in $\tau_{i}$. Similarly, the edge $e_4$ of $\tau_{i+1}$ can be thought of as represented by $Ee_4$ of $\tau_{i}$. All other edges of $\tau_{i+1}$ are thought of as the same as the corresponding edges of $\tau_{i}$. We also think of $e_1$ of $\tau_i$ as being represented by $Ee_1$ of $\tau_{i+1}$ (and analogously for $e_2$.) This gives rise to the following modifications.

First define $\mathfrak{g}'_i : \tau_{i+1} \rightarrow \tau_i$ by setting $\mathfrak{g}'_i(e_1) = \mathfrak{g}_i(\overline{E}e_1)$ and $\mathfrak{g}'_i(e_4) = \mathfrak{g}_i(E e_4)$ (provided that $e_1 \neq \overline{e_4}$, otherwise set $\mathfrak{g}'_i(e_1) = \mathfrak{g}_i(\overline{E}e_1\overline{E})$) and $\mathfrak{g}'_i(e) = \mathfrak{g}_i(e)$ for all other edges $e$ of $\tau_{i+1}$, where we label the oriented edges of $\tau_{i+1}$ the same way as $\tau_i$ for edges not shown in Figure \ref{fig:tt_split_isotopy} and a bar above an edge indicates the edge with its direction reversed. Finally, $\mathfrak{g}_{i+1} : \tau_{i+1} \rightarrow \tau_{i+1}$ is obtained from $\mathfrak{g}'_i$ as follows. If $e$ is an edge of $\tau_{i+1}$ we set $\mathfrak{g}_{i+1}(e)$ to be the edge path given by replacing each edge $e_1$ by $E e_1$, and $e_4$ by $\overline{E}e_4$ in the edge path $\mathfrak{g}'_i(e)$ (and similarly for $\overline{e_1}$ and $\overline{e_4}$.) The case where $E$ is split the other way is dealt with analogously. If $\tau_{i+1}$ is obtained from $\tau_i$ by splitting multiple large branches, then we perform the above procedure for each split in any order.

\begin{remark} In general for an edge $e$ the edge path $\mathfrak{g}_i(e)$, $i\in\Z_{>0}$, may enter and exit a vertex via edges on the same side of the tangent space at a switch.\end{remark}
			
\begin{figure}[H]
\centerline{\includegraphics[width=\textwidth]{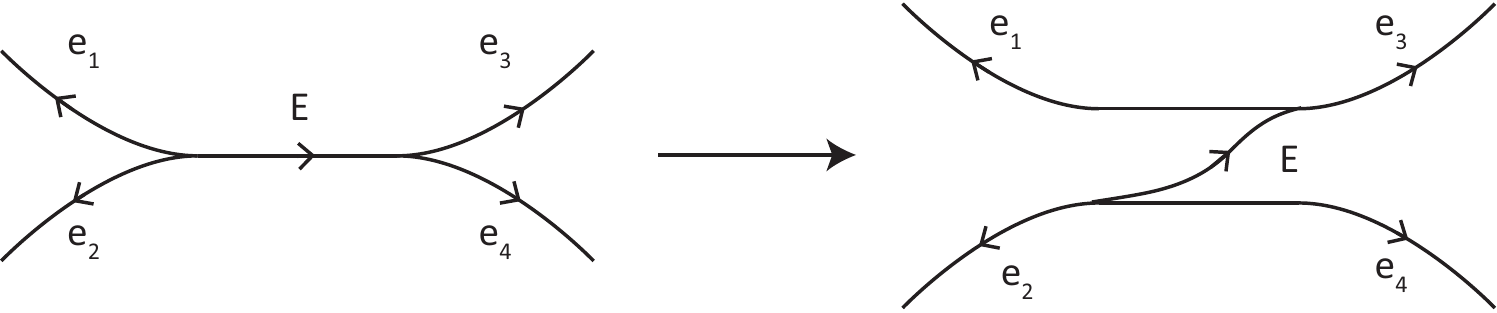}}
\caption{$\tau_i \rightharpoonup \tau_{i+1}$. By convention the new edge is also labelled $E$. The orientations of edges have been chosen arbitrarily.}
\label{fig:tt_split_isotopy}
\end{figure}
			
\subsubsection{Determine when the sequence of train tracks becomes periodic.}
			For each train track $\tau_j$ computed, we enumerate all combinatorial equivalences $\frac{1}{\lambda}\tau_i \sim \tau_j$ with $i < j$. Note that in general two train tracks may be combinatorially equivalent in multiple distinct ways. Suppose that $\frac{1}{\lambda}\tau_i \sim \tau_j$ and fix a combinatorial equivalence. By Proposition \ref{prop:ceqdiff}, the combinatorial equivalence is induced by a homeomorphism $\psi : S^\circ \rightarrow S^\circ$ such that $\frac{1}{\lambda}\psi(\tau_i) = \tau_j$. We determine whether $\psi$ and $\varphi^\circ$ are isotopic as homeomorphisms of $S^\circ$ by determining whether they induce the same outer automorphism of $\pi_1(S^\circ)$.
			
	  For $k \in \Z_{>0}$, there exists a map $h_k : S^\circ \rightarrow S^\circ$ homotopic to the identity, mapping $\tau_{k+1}$ onto $\tau_k$, which folds at the small branches of $\tau_{k+1}$ that arose by the maximal splitting $\tau_k \rightharpoonup \tau_{k+1}$. Then $h_k$ restricts to a map $h_k : \tau_{k+1} \rightarrow \tau_k$. For example, if $\tau_k \rightharpoonup \tau_{k+1}$ is as in Figure \ref{fig:tt_split_isotopy} with $i = k$, then $h_k(e_1) = \overline{E}e_1$, $h_k(e_4) = Ee_4$ and all other edges are mapped to the corresponding edge of $\tau_k$. By composing the restrictions, we obtain a map $f := h_{i} \circ h_{i+1} \circ \cdots \circ h_{j-1} : \tau_{j} \rightarrow \tau_i$, which is induced by a map $S^\circ \rightarrow S^\circ$ homotopic to the identity.
		
	 Let $\mathfrak{g}_i' := f \circ \psi : \tau_i \rightarrow \tau_i$. Now $\mathfrak{g}_i'$ is a graph map representing $\psi$, and $\mathfrak{g}_i$ is a graph map representing $\varphi^\circ$, both in terms of the underlying train track $\tau_i$. Hence it suffices to determine whether they are homotopic graph maps. We do so by checking if they induce the same outer automorphism of $\pi_1(S^\circ) \cong \pi_1(\tau_i)$.
	
	Let $v$ be a vertex of $\tau_i$. By a homotopy if necessary, we can assume that $\mathfrak{g}_i'$ and $\mathfrak{g}_i$ fix $v$. Then $\psi$ and $\varphi$ are isotopic in $S^\circ$ if and only if $(\mathfrak{g}_i')_*$ and $(\mathfrak{g}_i)_*$ are equal as outer automorphisms of the free group $\pi_1(\tau_i, v)$, that is, there exists $\alpha \in \pi_1(\tau_i, v)$ such that $(\mathfrak{g}_i')_*(\gamma) = \alpha \cdot (\mathfrak{g}_i)_*(\gamma) \cdot \alpha^{-1}$ for all $\gamma \in \pi_1(\tau_i, v)$. Noting that $\pi_1(\tau_i, v)$ is a free group on finitely many generators, we can algorithmically check this last condition as follows.
	
	Let $F_n = \langle a_1, a_2, \ldots, a_n \rangle$, be the free group on $n > 1$ generators, and let $f, g : F_n \rightarrow F_n$ be group automorphisms. Our goal is to find $x \in F_n$ such that $f(a_i) = x g(a_i) x^{-1}$ for $i \in \{1,2,\ldots,n\}$, or show that such an $x$ does not exist. If there exists such an $x$, then the cyclically reduced parts of $f(a_i)$ and $g(a_i)$ are equal for $i \in \{1,2,\ldots,n\}$, so we can write $f(a_1) = x_1 g(a_1) x_1^{-1}$ and $f(a_2) = x_2 g(a_2) x_2^{-1}$, for some $x_1, x_2 \in F_n$. Since $f$ is an automorphism we know that $f(a_1),\ldots,f(a_n)$ forms a basis for $F_n$. Hence for $i \in \{1,2,\ldots,n\}$, the stabiliser of $f(a_i)$ under the conjugation action is $\{f(a_i)^k\, |\, k \in \Z\}$. We have $x^{-1} f(a_1) x = x_1^{-1} f(a_1) x_1$, so that $x x_1^{-1} = f(a_1)^k$ for some $k \in \Z$. Similarly, $x = f(a_2)^m x_2$ for some $m \in \Z$. Hence $f(a_2)^{-k} f(a_1)^m = x_2 x_1^{-1}$. A priori, if we did not know whether $f$ and $g$ were equal as outer automorphisms of $F_n$, then we could try to solve this last equation for $k, m \in \Z$. Since $x_2 x_1^{-1}$ as a reduced word in the generators is of finite length, there are only finitely many possible choices of $k, m$ to check, otherwise as a reduced word $f(a_2)^{-k} f(a_1)^m$ would have length exceeding the length of $x_2 x_1^{-1}$. If a solution exists, it is unique since $f(a_1), \ldots, f(a_n)$ forms a basis for $F_n$ so $f(a_2)^{-k} f(a_1)^m$ is distinct for distinct pairs $(k,m)$. Finally, if we are able to find $k, n$ solving the equation, then letting $x = f(a_1)^k x_1$, we can check whether $f(a_i) = x g(a_i) x^{-1}$ for $i = \{1,2,\ldots,n\}$, as required.

\section{Example}\label{sec:example}
Let $T$ be the once-punctured torus given by identifying opposite sides of the square with its vertices removed, as shown in Figure \ref{torus_twist_curves}. Let $f : T \rightarrow T$ be the homeomorphism given by a left Dehn twist in $c_2$, followed by a right Dehn twist in $c_1$, where $c_1$ and $c_2$ are the curves shown in Figure \ref{torus_twist_curves}.
\begin{figure}[H]
\centerline{\includegraphics[height=180pt]{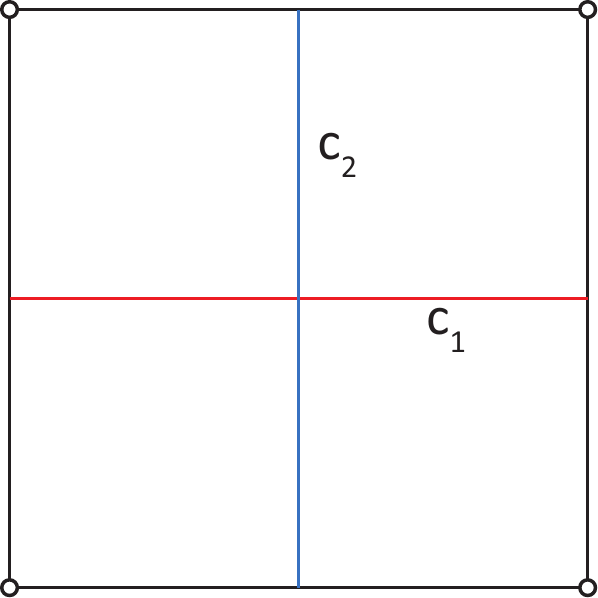}}
\caption{We orient the surface using an anticlockwise rotation direction.}
\label{torus_twist_curves}
\end{figure}

By applying the Bestvina-Handel algorithm we find that $f$ is a pseudo-Anosov homeomorphism with measured train track $\tau$ suited to the stable measured geodesic lamination as shown in Figure \ref{fig:torus_inv_tt}. In brief, we will convert $\tau$ into a trivalent train track. We then split the train track until we obtain the periodic splitting sequence, and construct the ideal triangulation of the mapping torus $M_f$.

\begin{figure}[H]
\centerline{\includegraphics[height=180pt]{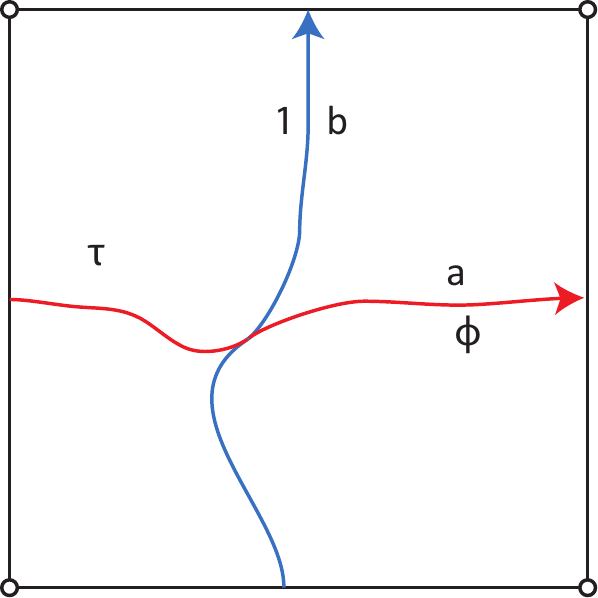}}
\caption{Train track $\tau$, where $\phi = \frac{1}{2}(1+\sqrt{5})$ is the golden ratio.}
\label{fig:torus_inv_tt}
\end{figure}
\vspace{-3mm}

The initial train track map on $\tau$ is given by
\begin{eqnarray*}
\mathfrak{g} : \tau &\rightarrow& \tau \\
a &\mapsto& aba \\
b &\mapsto& ba
\end{eqnarray*}

The train track $\tau$ has one vertex of degree four. As in Section \ref{subsec:step2} we comb edges $a$ and $b$, introducing a new edge $c$ as shown in Figure \ref{fig:torus_trivalent_tt} below. We continue to denote the resulting trivalent train track by $\tau$. The weight of edge $c$ is the sum of the weights of edges $a$ and $b$.

\begin{figure}[H]
\centerline{\includegraphics[height=180pt]{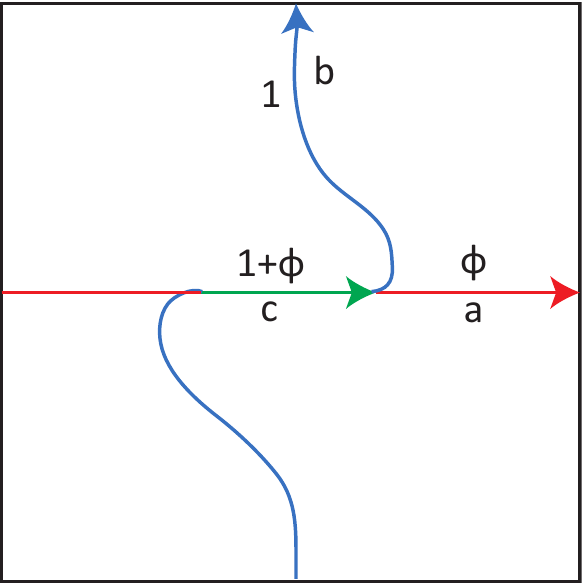}}
\caption{Trivalent weighted train track $\tau$.}
\label{fig:torus_trivalent_tt}
\end{figure}

The resulting map $\mathfrak{g}'$ on $\tau$ is given by
\begin{eqnarray*}
\mathfrak{g}' : \tau &\rightarrow& \tau \\
a &\mapsto& cacbca \\
b &\mapsto& cbca \\
c &\mapsto& \mbox{(empty path)}
\end{eqnarray*}
which is computed from $\mathfrak{g}$ by replacing $a$ by $ca$ and $b$ by $cb$ in the image of each edge, and mapping $c$ to the empty path ($c$ gets mapped into a vertex.) We will drop primes and denote $\mathfrak{g}'$ by $\mathfrak{g}$.

Let $\tau_0 = \tau$, as this will be our initial train track. We compute the image of $\tau_0$ after applying $f$, see Figure \ref{fig:torus_tt_image}. We will first detect the periodic splitting sequence by drawing pictures of train tracks on the surface keeping track of the embedding of the train tracks on the surface. We then execute our algorithm to detect periodicity combinatorially, which does not require train track embedding information, verifying that both methods agree.

\begin{figure}[H]
\centerline{\includegraphics[height=180pt]{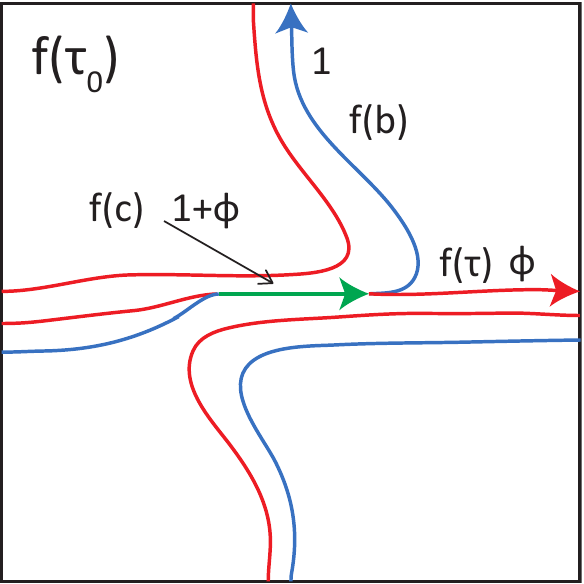}}
\caption{$f(\tau_0)$ up to isotopy.}
\label{fig:torus_tt_image}
\end{figure}

In Figure \ref{fig:torus_tt_split}, we illustrate three maximal splits starting with $\tau_0$.

\begin{figure}[H]
\centerline{\includegraphics[width=380pt]{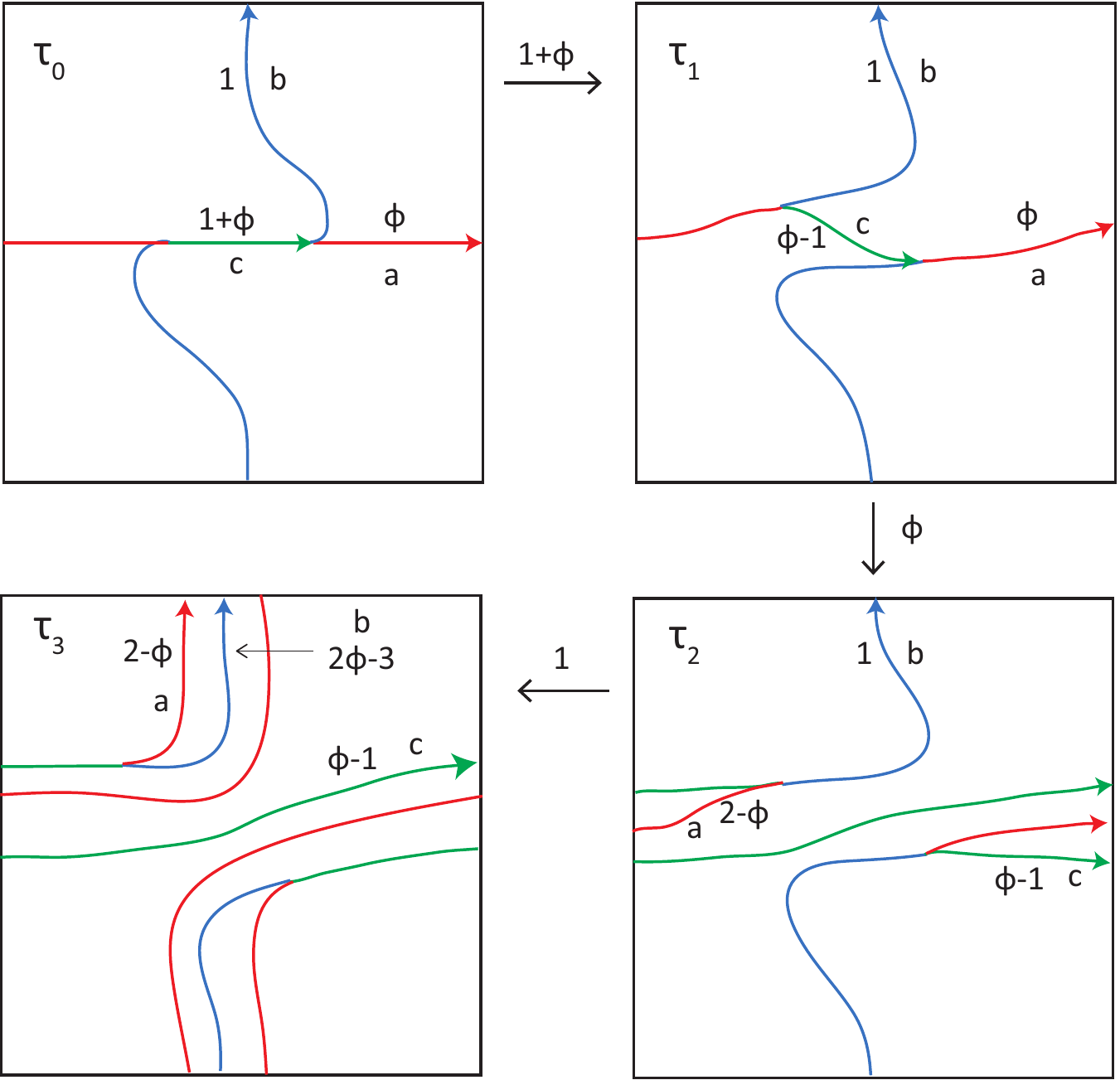}}
\caption{Splitting sequence. We have indicated the weight of the branch undergoing a split.}
\label{fig:torus_tt_split}
\end{figure}

We claim that $f(\lambda^{-1}\tau_1) = \tau_3$, that is, $f(\tau_1)$ and $\tau_3$ are isotopic in such a way that after scaling the weights of $\tau_3$ by the dilatation $\lambda = 1+\phi$, the branches of the train tracks which are identified have equal weights. We compute $f(\tau_1)$ by splitting $f(\tau_0)$ (Figure \ref{fig:torus_tt_image}), this is shown in Figure \ref{fig:torus_tt_isotopy}. We see that $f(\tau_1) = \tau_3$, given by $f(a) = c$, $f(b) = a$ and $f(c) = b$ and that the weights are preserved after scaling, e.g. $\lambda \mu_3(a) = \lambda (2-\phi) = 1$.

\begin{figure}[H]
\centerline{\includegraphics[width=380pt]{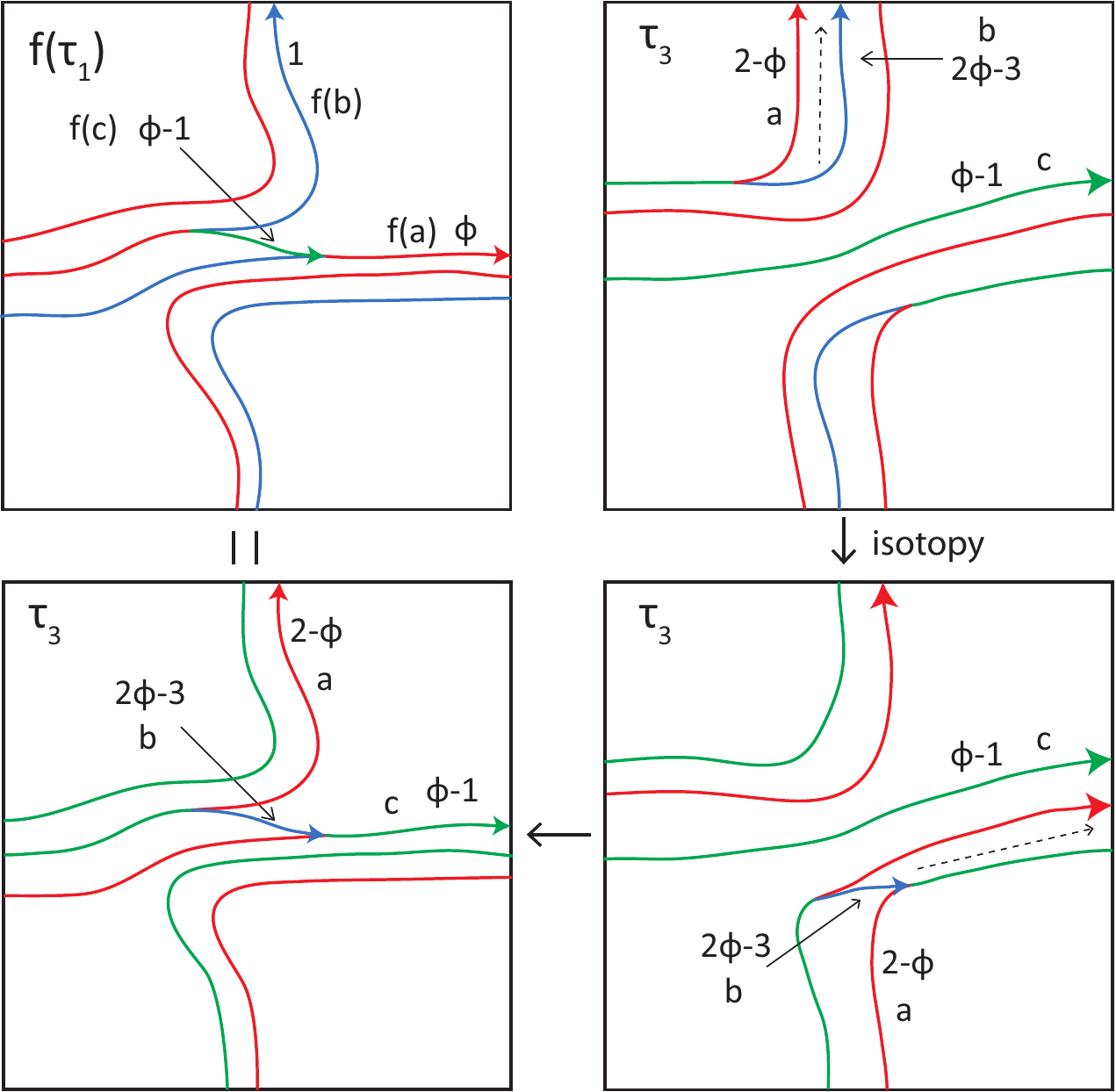}}
\caption{Isotoping $\tau_3$ shows that $f(\tau_1) = \tau_3$. The dotted arrows are aids to visualise the isotopies.}
\label{fig:torus_tt_isotopy}
\end{figure}

We now illustrate how the periodic splitting sequence is detected combinatorially following Section \ref{combeqv}. We compute a homotopy equivalence $\mathfrak{g}_1 : \tau_1 \rightarrow \tau_1$ obtained from $\mathfrak{g}$. First we compute the intermediate function $\mathfrak{g}_1'$.
\begin{eqnarray*}
\mathfrak{g}_1' : \tau_1 &\rightarrow& \tau_0 \\
a &\mapsto& \mathfrak{g}(a) = cacbca \\
b &\mapsto& \mathfrak{g}(cbc) = cbca \\
c &\mapsto& \mathfrak{g}(c) = \mbox{(empty path)}.
\end{eqnarray*}
We now replace every occurrence of $b$ in the image of an edge with $\overline{c}b\overline{c}$.
\begin{eqnarray*}
\mathfrak{g}_1 : \tau_1 &\rightarrow& \tau_1 \\
a &\mapsto& cac(\overline{c}b\overline{c})ca \\
b &\mapsto& c(\overline{c}b\overline{c})ca \\
c &\mapsto& \mbox{(empty path)}.
\end{eqnarray*}
For convenience, we homotope $\mathfrak{g}_1$ to remove backtracking of the form $c\overline{c}$ or $\overline{c}c$ in the image of each edge, so that $\mathfrak{g}_1(a) = caba$ and $\mathfrak{g}_1(b) = ba$. There are two distinct combinatorial equivalences between $\tau_1$ and $\tau_3$, induced by homeomorphisms of $S$ which restrict to maps $\psi_1 : \tau_1 \rightarrow \tau_3$, $\psi_1(a) = c, \psi_1(b) = a, \psi_1(c) = b$, and $\psi_2 : \tau_1 \rightarrow \tau_3$, $\psi_2(a) = c, \psi_2(b) = a, \psi_2(c) = b$, respectively.

We compute $h_1 : \tau_2 \rightarrow \tau_1$, $h_1(a) = a$, $h_1(b) = b$, $h_1(c) = aca$ and $h_2 : \tau_3 \rightarrow \tau_2$, $h_2(a) = bab$, $h_2(b) = b$, $h_2(c) = c$. Let $h = h_1 \circ h_2 : \tau_3 \rightarrow \tau_1$. Then $h\circ \psi_1 : \tau_1 \rightarrow \tau_1$, $a \mapsto aca$, $b \mapsto bab$, $c \mapsto b$.

Let $v \in \tau_1$ be the terminal vertex of $c$. Then $\mathfrak{g}_1$ does not fix $v$. We homotope $\mathfrak{g}_1$ along $c$, so that $\mathfrak{g}_1(a) = \overline{c}(caba)c = abac$, $\mathfrak{g}_1(b) = \overline{c}(ba)c$, and $\mathfrak{g}_1(c)$ is the empty path (notice that we have removed backtracking.) Then $\mathfrak{g}_1$ fixes $v$.

The fundamental group $\pi_1(\tau_1, v)$ is the free group generated by the two loops $A = ac$ and $B = \overline{c}b$. We check that $(\mathfrak{g}_1)_*$ and $(h\circ\psi_1)_*$ are equal as outer automorphisms of $\pi_1(\tau_1, v)$.
We have $(\mathfrak{g}_1)_* : \pi_1(\tau_1, v) \rightarrow \pi_1(\tau_1, v)$, $A = ac \mapsto abac = ABA$, $B = \overline{c}b \mapsto \overline{c}bac = BA$, and $(h\circ\psi_1)_* : \pi_1(\tau_1, v) \rightarrow \pi_1(\tau_1, v)$, $A = ac \mapsto acab = AAB$, $B = \overline{c}b \mapsto \overline{b}bab = AB$. We see that $(\mathfrak{g}_1)_*(\alpha) = A^{-1} (h\circ\psi_1)_*(\alpha) A$ for all $\alpha \in \pi_1(\tau_1, v)$, as required.

It may be checked that the combinatorial equivalence $\psi_2$ does not induce the correct outer automorphism of $\pi_1(\tau_1, v)$, but we do not show the details.

We use the periodic splitting sequence to construct the veering triangulation, see Figure \ref{fig:torus_tt_layers}. We draw the triangulation of the punctured torus $T$ dual to each train track in the periodic splitting sequence, to obtain ``layers" of triangulations of $T$. Between every two layers a tetrahedron is inserted which interpolates between the layers. Layers $0$ and $2$ are then glued by the monodromy.

\begin{figure}[H]
\centerline{\includegraphics[width=\columnwidth]{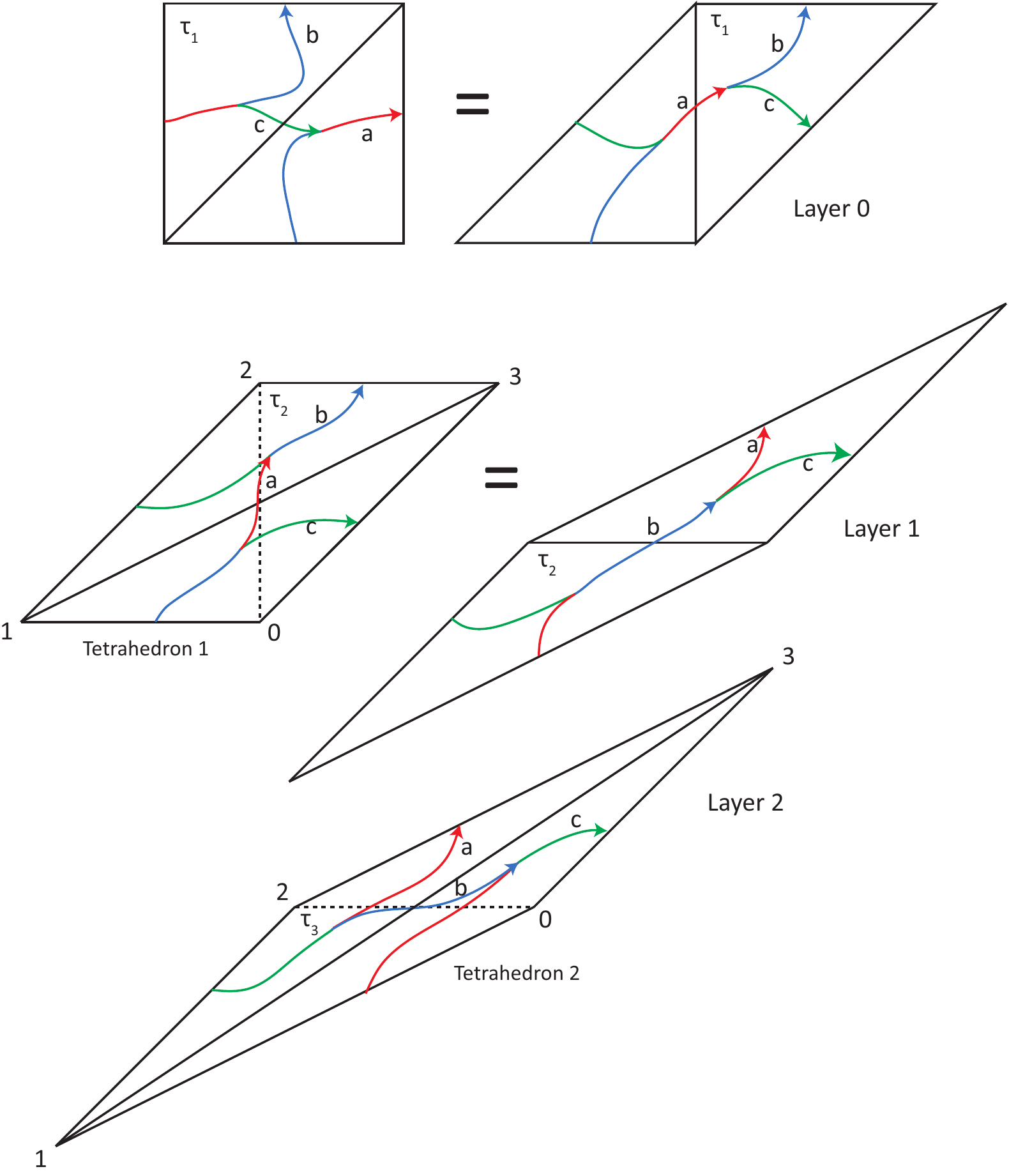}}
\caption{Triangulations of the punctured torus, dual to the train tracks of the splitting sequence. For aesthetic reasons we rearrange the triangles in layers 0 and 1 so that the diagonal of the parallelogram crosses the large branch of a train track.}
\label{fig:torus_tt_layers}
\end{figure}

Let $T_1$ and $T_2$ be tetrahedron $1$ and $2$, respectively.
Since the triangles of the front (back) face of $T_1$ ($T_2$) correspond to triangles of layer $1$, we obtain face pairings between the front of $T_1$ and back face of $T_2$. In order to completely specify all the tetrahedron face pairings it is necessary to pair the triangles of tetrahedron $1$ attached to layer $0$, to the triangles of tetrahedron $2$ attached to layer $2$. 

The edges of triangles in layer $0$ are in one to one correspondence with oriented branches of $\tau_1$ which emanate from the unique vertex of $\tau_1$ interior to the triangle. Since we know that $f(\tau_1) = \tau_3$, with $f(a) = c$, $f(b) = a$ and $f(c) = b$ in agreement with $\psi_1$, we use this information to determine the remaining face pairings. Hence, we have that face $(102)$ of $T_1$ is paired with face $(103)$ of $T_2$, and face $(032)$ of $T_1$ is paired with face $(132)$ of $T_2$.

\begin{center}
\begin{table}[htbp]
  \centering
	\caption{Tetrahedron face pairings}
    \begin{tabular}{|c|c|}
\hline
\mbox{Tetrahedron } 1 & \mbox{Tetrahedron } 2 \\ \hline
(103) & (203) \\
(321) & (021) \\
(102) & (103) \\ 
(032) & (132) \\ 
\hline
\end{tabular}
\end{table}
\end{center}

	\section{Conjugacy testing}\label{sec:conjtest}
	Given pseudo-Anosov homeomorphisms $\phi, \phi' : S \rightarrow S$ of a punctured surface $S$, in this section we describe an algorithm to effectively decide whether or not $\phi$ and $\phi'$ are conjugate in $\mbox{MCG}(S)$. We have implemented this algorithm as part of our computer program.
	
	\begin{mydef} Let $S$ be a punctured surface. Let $\phi, \phi' : S \rightarrow S$ be pseudo-Anosov homeomorphisms with periodic splitting sequences
		$$(\tau_0, \mu_0) \rightharpoonup^m (\tau_m, \mu_m) = \phi(\tau_0, \frac{1}{\lambda}\mu_0) \rightharpoonup \cdots$$
		and
		$$(\tau'_0, \mu'_0) \rightharpoonup^n (\tau'_n, \mu'_n) = \phi'(\tau'_0, \frac{1}{\lambda'}\mu'_0) \rightharpoonup \cdots$$
		respectively, where $n,m \in \Z_{>0}$ and $\lambda, \lambda' \in \R_{>0}$ is the dilatation of $\phi, \phi'$ respectively, as in Theorem \ref{thm:splitting}. We denote the periodic splitting sequences of $\phi, \phi'$ by $\mathcal{S}, \mathcal{S}'$ respectively. We say that $\mathcal{S}$ and $\mathcal{S}'$ are \emph{combinatorially isomorphic} if there exists a diffeomorphism which conjugates between them up to rescaling, that is, $n = m$ and there exists $h : S \rightarrow S$, $p,q \in \Z_{\ge 0}$ and $c \in \R_{>0}$ such that:
		\begin{enumerate}[(i)]
			\item $\phi' = h \circ \phi \circ h^{-1}$, and
			\item $h(\tau_{p+i}, \mu_{p+i}) = (\tau'_{q+i}, c\mu'_{q+i})$, for all $i \in \Z_{\ge 0}$.
		\end{enumerate}
	\end{mydef}
	
	\begin{lemma}[Corollary 3.4 of \cite{MR2866919}]\label{lem:comsplit} Let $\mathcal{L}$ be a measured geodesic lamination suited to train tracks $(\tau, \mu), (\tau', \mu')$. Then $(\tau, \mu)$ and $(\tau', \mu')$ eventually split to a common train track, that is, there exists $(\tau'', \mu'')$ and $n,m\in\Z_{\ge 0}$ such that $(\tau, \mu) \rightharpoonup^n (\tau'', \mu'')$ and $(\tau', \mu') \rightharpoonup^m (\tau'', \mu'')$.
	\end{lemma}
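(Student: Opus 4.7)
The plan is to exploit the fact that the maximal splitting sequence of a measured train track suited to $\mathcal{L}$ is canonically determined by $\mathcal{L}$ itself. Since $\mathcal{L}$ is suited to $(\tau, \mu)$, the weight $\mu(e)$ equals the $\mathcal{L}$-transverse measure of a tie across the branch $e$, and the decision of how each large branch splits (the $\max(a,d)$ vs.\ $\max(b,c)$ alternative of Figure \ref{fig:tt_split_move}) is forced by how the leaves of $\mathcal{L}$ enter the switches at its ends. Thus starting maximal splitting from either $(\tau, \mu)$ or $(\tau', \mu')$ produces a canonical sequence of finer and finer carrying neighbourhoods of $\mathcal{L}$, and the task reduces to showing these two sequences eventually meet.

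The first concrete step is to produce a common splitting descendant $(\sigma, \nu)$ of $(\tau, \mu)$ and $(\tau', \mu')$ via arbitrary (not necessarily maximal) splits. This follows from classical train track theory: two train tracks that both fully carry $\mathcal{L}$ are related by a finite zig-zag of splits and folds (see \cite{PENNER, Mosher2}), and folds can be eliminated from such a zig-zag by splitting both sides further, yielding a common splitting descendant. Here one uses that $\mathcal{L}$ puts positive weights on every branch of both $\tau$ and $\tau'$, so no degenerate cases arise.

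Given $(\sigma, \nu)$, I would then show that the maximal splitting sequences of $(\tau, \mu)$ and of $(\tau', \mu')$ each pass through a train track combinatorially equivalent to $(\sigma, \nu)$ after finitely many steps; Proposition \ref{prop:ceqdiff} then upgrades the combinatorial equivalence to an ambient isotopy, producing the common train track required by the lemma. The key monotonicity claim is that every large branch a maximal split chooses to split is also split on some path to $\sigma$, so after finitely many maximal splits all of the splits needed to reach $\sigma$ have been performed. The main obstacle will be verifying this monotonicity rigorously: a maximal split bundles together several individual splits at disjoint large branches, whose ordering need not match that of the chosen splitting path to $\sigma$. I would handle this by checking that splits at disjoint large branches commute (so maximal splitting is independent of the internal ordering), which then lets one compare the canonical maximal sequence directly with any auxiliary splitting path to $\sigma$ and conclude that $\sigma$ is reached in finite time from both sides.
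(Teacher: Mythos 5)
The paper does not actually prove this lemma; it is quoted verbatim as Corollary 3.4 of Agol's paper \cite{MR2866919}, so there is no in-paper argument to compare against. I will therefore assess your proposal on its own merits.

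Your overall route---produce a common splitting descendant $\sigma$ of $\tau$ and $\tau'$ from classical train track theory, then argue that the two maximal splitting sequences pass through a common track---is a reasonable plan. But the ``key monotonicity claim'' is false as stated, and it is exactly where the real work lies. It is not true that every branch chosen by a maximal split is split on some splitting path from $\tau$ to $\sigma$. If $\tau \rightharpoonup_b \sigma$ splits a large branch $b$ that does not have maximal weight in $\tau$, the first maximal split of $\tau$ splits some other branch $a$ with $\mu(a)>\mu(b)$; that split is not on any path from $\tau$ to $\sigma$, since $\sigma$ still has $a$ unsplit. In particular, the maximal splitting sequence of $\tau$ need not pass through $\sigma$ at all, so the plan of ``reaching $\sigma$ from both sides'' breaks down.

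What is actually needed is that the maximal splitting sequences of $\tau$ and $\sigma$ eventually become \emph{equal}, and this requires two ingredients you have not supplied. First, a genuine confluence argument: so long as $b$ (or its successor) does not attain maximal weight in $\tau_k$, the maximal splits of $\tau_k$ and $\sigma_k$ act on the same set of large branches, all disjoint from $b$ (two distinct large branches of a train track cannot share a switch), so the relation $\tau_{k+1}\rightharpoonup_b\sigma_{k+1}$ persists inductively; your observation about commutativity of splits at disjoint large branches is the right tool here, but it resolves only the ordering of \emph{simultaneous} splits, not the possibility that $b$ waits indefinitely. Second, and crucially, you must show that the maximal weight along the sequence $(\tau_k,\mu_k)$ eventually drops below $\mu(b)$, so that $b$ itself is split at some finite stage $K$ and the two sequences merge there. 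That maximal weights tend to zero along a maximal splitting sequence is a nontrivial theorem (it is essentially Agol's Theorem 3.2, the convergence of the maximal splitting sequence to the lamination), and your argument cannot avoid invoking it or something equivalent; as written the proposal is silent on this point.
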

	
	\begin{thm}\label{thm:conj} Let $\phi, \phi' : S \rightarrow S$ be pseudo-Anosov homeomorphisms with periodic splitting sequences
		$\mathcal{S}$ and $\mathcal{S}'$ respectively, as in Theorem \ref{thm:splitting}.
	Then $\phi$ and $\phi'$ are conjugate in $\mbox{MCG}(S)$ if and only if $\mathcal{S}$ and $\mathcal{S}'$ are combinatorially isomorphic.
	\end{thm}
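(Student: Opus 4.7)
The backward implication is tautological: clause~(i) of the combinatorial isomorphism definition is literally the relation $\phi' = h\phi h^{-1}$. Passing to isotopy classes yields conjugacy in $\mathrm{MCG}(S)$, so all real content lies in the forward direction. The plan for ($\Rightarrow$) is to produce a single diffeomorphism $h$ that simultaneously realises the conjugacy and identifies the two splitting sequences, using Lemma~\ref{lem:comsplit} applied to the pushforward of $(\tau_0, \mu_0)$ under $h$.

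Assume $\phi' = h \phi h^{-1}$ for a diffeomorphism $h$ representing the conjugating mapping class. Since dilatation is a conjugacy invariant, $\lambda = \lambda'$, and since the stable measured lamination is conjugacy-invariant up to positive scale, one has $h_*\mathcal{L}^s_\phi = c^{-1}\mathcal{L}^s_{\phi'}$ for some $c > 0$. Consequently $h(\tau_0, \mu_0)$ and $(\tau'_0, c\mu'_0)$ are two measured train tracks suited to the same measured lamination on $S$, so Lemma~\ref{lem:comsplit} yields integers $p, q \in \Z_{\ge 0}$ and a common measured train track $(\sigma, \nu)$ with
$$h(\tau_0, \mu_0) \rightharpoonup^p (\sigma, \nu), \qquad (\tau'_0, c\mu'_0) \rightharpoonup^q (\sigma, \nu).$$
Because $h$ is a diffeomorphism, maximal splits commute with $h$, so applying $h$ to the splits starting at $(\tau_0,\mu_0)$ gives $h(\tau_p, \mu_p) = (\sigma, \nu) = (\tau'_q, c\mu'_q)$. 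Iterating maximal splits then produces $h(\tau_{p+i}, \mu_{p+i}) = (\tau'_{q+i}, c\mu'_{q+i})$ for all $i \ge 0$, which is clause~(ii).

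To verify $m = n$, combine $\phi' \circ h = h \circ \phi$ with the periodicity relations $\phi(\tau_i) = \tau_{i+m}$ and $\phi'(\tau'_j) = \tau'_{j+n}$ (forgetting measures) to get
$$\tau'_{q+i+m} = h\phi(\tau_{p+i}) = \phi' h(\tau_{p+i}) = \tau'_{q+i+n}$$
for every $i \ge 0$. Under the convention that $m$ and $n$ denote the minimal periods of the two eventually-periodic splitting sequences, this forces $m = n$. I expect the main obstacle to be this last step together with the measure bookkeeping: pinning down the rescaling constant $c$ so that all measured-lamination identifications align, and ensuring the ``minimal period'' convention is used consistently. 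A secondary technicality is that conjugacy in $\mathrm{MCG}(S)$ really holds only up to isotopy, so one must first replace $\phi'$ (or adjust $h$) by an isotopic representative to arrange the strict equality $\phi' = h\phi h^{-1}$; this is legitimate because the periodic splitting sequence is an invariant of the isotopy class of a pseudo-Anosov.
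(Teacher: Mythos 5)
Your proof is correct and follows essentially the same route as the paper's: transport the stable lamination by $h$, use uniqueness in $\mathcal{PML}(S)$ to match it with $\mathcal{L}^s_{\phi'}$ up to a positive scale, apply Lemma~\ref{lem:comsplit} to $h(\tau_0,\mu_0)$ and a rescaled $(\tau'_0,\mu'_0)$, and use that maximal splits commute with diffeomorphisms to propagate the identification along the sequence. The only differences are cosmetic (your $c$ plays the role of $c^{-1}$ in the paper's bookkeeping) plus the explicit check that $m=n$, which the paper leaves implicit; note that the cleanest way to close that step is via the measured equality $(\tau'_{q+i+m},\mu'_{q+i+m})=(\tau'_{q+i+n},\mu'_{q+i+n})$, since a maximal split strictly decreases the maximum branch weight.
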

	
	\begin{proof} Assume that $\phi$ and $\phi'$ are conjugate, and let $h \in \mbox{MCG}(S)$ such that $\phi' = h \circ \phi \circ h^{-1}$. 
	
	Let the periodic splitting sequences $\mathcal{S}$ and $\mathcal{S}'$ be given by
	$$(\tau_0, \mu_0) \rightharpoonup^m (\tau_m, \mu_m) = \phi(\tau_0, \frac{1}{\lambda}\mu_0) \rightharpoonup \cdots$$
		and
		$$(\tau'_0, \mu'_0) \rightharpoonup^n (\tau'_n, \mu'_n) = \phi'(\tau'_0, \frac{1}{\lambda'}\mu'_0) \rightharpoonup \cdots$$
		respectively, where $n,m \in \Z_{>0}$ and $\lambda, \lambda' \in \R_{>0}$ is the dilatation of $\phi, \phi'$ respectively.
	
	Let $\mathcal{L}, \mathcal{L}'$ be the stable measured geodesic laminations of $\phi, \phi'$ which are suited to $(\tau_0, \mu_0), (\tau'_0, \mu'_0)$ respectively.
	Then $(\tau'', \mu'') := h(\tau_0, \mu_0)$ is an invariant train track for $\phi'$ and there exists a stable measured geodesic lamination $\mathcal{L}''$ suited to $(\tau'', \mu'')$ (which is essentially given by straightening the leaves of $h(\mathcal{L})$.) By uniqueness of the stable lamination of $\phi'$ in $\mathcal{PML}(S)$, there exists $c \in \R_{>0}$ such that $\mathcal{L}' = c\,\mathcal{L}''$, where $c\,\mathcal{L}''$ denotes the measured geodesic lamination equal to $\mathcal{L}''$ as a lamination but with transverse measure scaled by $c$. Then $\mathcal{L}'$ is suited to $(\tau'', c\mu'')$ and $(\tau'_0, \mu'_0)$. Therefore by Lemma \ref{lem:comsplit}, we know that $(\tau'', c\mu'') = h(\tau_0, c\mu_0)$ and $(\tau'_0, \mu'_0)$ have a common split, that is, there exists $p,q \in \Z_{\ge 0}$ such that
	$h(\tau_p, c\mu_p'') = (\tau'_q, \mu'_q)$, which implies
	$$h(\tau_{p+i}, c\mu_{p+i}'') = (\tau'_{q+i}, \mu'_{q+i}) \mbox{ for }i\in\Z_{\ge 0}.$$
	Thus, $\mathcal{S}$ and $\mathcal{S}'$ are combinatorially isomorphic.
	
	We now prove the other direction. Assume that $\mathcal{S}$ and $\mathcal{S}'$ are combinatorially isomorphic. Then by definition there exists a diffeomorphism $h \in \mbox{MCG}(S)$ such that $\phi' = h \circ \phi \circ h^{-1}$, as required.
	\end{proof}
	
	We remark that Theorem \ref{thm:conj} is similar to \cite[Theorem 10.3.2]{Mosher2} of Lee Mosher.
	
	\subsection{Algorithm for determining conjugacy in \texorpdfstring{$\mbox{MCG}(S)$}{MCG(S)}}
	Proposition \ref{prop:ceqdiff} allows us to determine combinatorially whether or not two periodic splitting sequences are combinatorially isomorphic, which when combined with Theorem \ref{thm:conj} gives an effective algorithm for testing for conjugacy of pseudo-Anosov mapping classes in the mapping class group.
	
	Let $\phi, \phi' : S \rightarrow S$ be pseudo-Anosov homeomorphisms with periodic splitting sequences
		$$(\tau_0, \mu_0) \rightharpoonup^m (\tau_m, \mu_m) = \phi(\tau_0, \lambda^{-1}\mu_0) \rightharpoonup \cdots$$
		and
		$$(\tau'_0, \mu'_0) \rightharpoonup^n (\tau'_n, \mu'_n) = \phi'(\tau'_0, \lambda'^{-1}\mu'_0) \rightharpoonup \cdots$$
	respectively, where $n,m \in \Z_{>0}$ and $\lambda, \lambda' > 1$ is the dilatation of $\phi, \phi'$ respectively, as in Theorem \ref{thm:splitting}.
	We determine whether or not $\phi$ and $\phi'$ are conjugate in $\mbox{MCG}(S)$ using the following procedure.
	\begin{enumerate}[(i)]
	\item If $n \neq m$ or $\lambda \neq \lambda'$ then $\phi$ and $\phi'$ are not conjugate in $\mbox{MCG}(S)$. 
	\item Otherwise, we enumerate all combinatorial equivalences (up to rescaling of measures) between $(\tau_i, \mu_i)$ and $(\tau'_j, \mu'_j)$, for $0 \le i < m = n$ and $0 \le j < n$, then check whether any combinatorial equivalence conjugates between $\phi$ and $\phi'$ (in fact one only needs to check this for $i = 0$ and $0 \le j < n$.)
	
	More precisely, let $\Phi$ be a combinatorial equivalence between $(\tau_i, \mu_i)$ and $(\tau'_j, c\mu'_j)$, where $c \in \R_{>0}$, $0 \le i < n$ and $0 \le j < n$. Then $\Phi$ gives a bijection between the edges of the train tracks which we denote by
	$$\Phi_0 : \mathcal{E}_{dir}(\tau_i) \rightarrow \mathcal{E}_{dir}(\tau'_j),$$
	where the notation $\mathcal{E}_{dir}(\tau_i)$ means the set of directed edges of $\tau_i$.
	Furthermore, by splitting the train tracks $(\tau_i, \mu_i)$ and $(\tau_j, \mu_j)$ simultaneously, keeping track of the bijection between the edges induced by $\Phi_0$, we obtain a combinatorial equivalence
	$$\Phi_n : \mathcal{E}_{dir}(\tau_{i+n}) \rightarrow \mathcal{E}_{dir}(\tau'_{j+n}).$$
	($\Phi_0$ and $\Phi_n$ should be thought of as representing the diffeomorphism induced by $\Phi$ given by Proposition \ref{prop:ceqdiff}, in terms of different underlying train tracks.)
	The homeomorphisms $\phi$ and $\phi'$ induce combinatorial equivalences
	$$\phi_{*} : \mathcal{E}_{dir}(\tau_i) \rightarrow \mathcal{E}_{dir}(\tau_{i+n}) \mbox{ and }\phi'_{*} : \mathcal{E}_{dir}(\tau'_j) \rightarrow \mathcal{E}_{dir}(\tau'_{j+n}),$$
	respectively.
		\begin{enumerate}[(a)]
			\item If $\phi'_{*} \circ \Phi_0 = \Phi_n \circ \phi_{*}$ then we conclude that $\phi$ and $\phi'$ are conjugate in $\mbox{MCG}(S)$, as this implies $\phi' = h \circ \phi \circ h^{-1}$ where $h \in \mbox{MCG}(S)$ is the homeomorphism inducing the combinatorial equivalence $\Phi$ as in Proposition \ref{prop:ceqdiff}.
			\item	Otherwise, if for no combinatorial equivalence does (a) hold, then $\phi$ and $\phi'$ are not conjugate in $\mbox{MCG}(S)$.
		\end{enumerate}
	\end{enumerate}
	
\begin{remark} Given a pseudo-Anosov homeomorphism $\phi$, with periodic splitting sequence of train tracks $(\tau_0, \mu_0) \rightharpoonup^m (\tau_m, \mu_m)$, the measures $\mu_0,\ldots,\mu_m$ are determined up to scaling by the (unmeasured) train tracks $\tau_0,\ldots,\tau_m$ together with the combinatorial equivalence given by $\phi(\lambda^{-1}\tau_0) = \tau_m$, where $\lambda$ is the dilatation of $\phi$. This follows from the argument given in the last paragraph of the proof of Proposition 4.2 in \cite{MR2866919}. Thus, in Step (ii) of our algorithm above, we may weaken our notion of combinatorial equivalence so that we no longer require that a combinatorial equivalence preserves weights of edges of train tracks. As long as the condition in (a) is satisfied the weights are guaranteed to agree. This allows us to avoid comparing weights of edges of train tracks.

Note, however, that in order to compute the train tracks $\tau_0,\ldots,\tau_m$ we need to be able to perform maximal splits, which requires that we identify all edges of a given train track which have maximal weight. In theory, this can be done exactly since the weights of the train tracks can be assumed to be algebraic numbers, as the weights of the initial train track in our algorithm are given by the entries of an eigenvector of an integer valued matrix and thus can be chosen to be algebraic numbers, and weights of subsequent train tracks are obtained from these by algebraic operations. In practice, in our program we only keep track of the weights of train tracks to a certain number of decimal places. Hence, if our program outputs that two homeomorphisms are conjugate then they are guaranteed to be, but if our program outputs that two homeomorphisms are not conjugate then there is a (small) chance that numerical errors occurred and that the homeomorphisms are in fact conjugate.
\end{remark}
	
\section{Results}\label{chapter:results}
We have included the beginning of longer tables of data which we have collected, given as tables in Appendix \ref{app:tables}. A number of triangulations which SnapPy reports are non-geometric were found, and in the case that we start with a pseudo-Anosov homeomorphism of a once-punctured genus 2 surface, a table of such examples is given in Appendix \ref{app:tables}.

Many of the veering triangulations produced are relatively large, and could be simplified by SnapPy to much smaller triangulations, see Figure \ref{fig:tri_table}. Heuristically, triangulations of a fixed hyperbolic manifold with a relatively small number of tetrahedra have a better chance of being geometric, and SnapPy makes an effort to simplify triangulations of link complements that it produces \cite[Chapter 10]{MR2179010}.
\begin{figure}[H]
\centerline{\includegraphics[width=350pt]{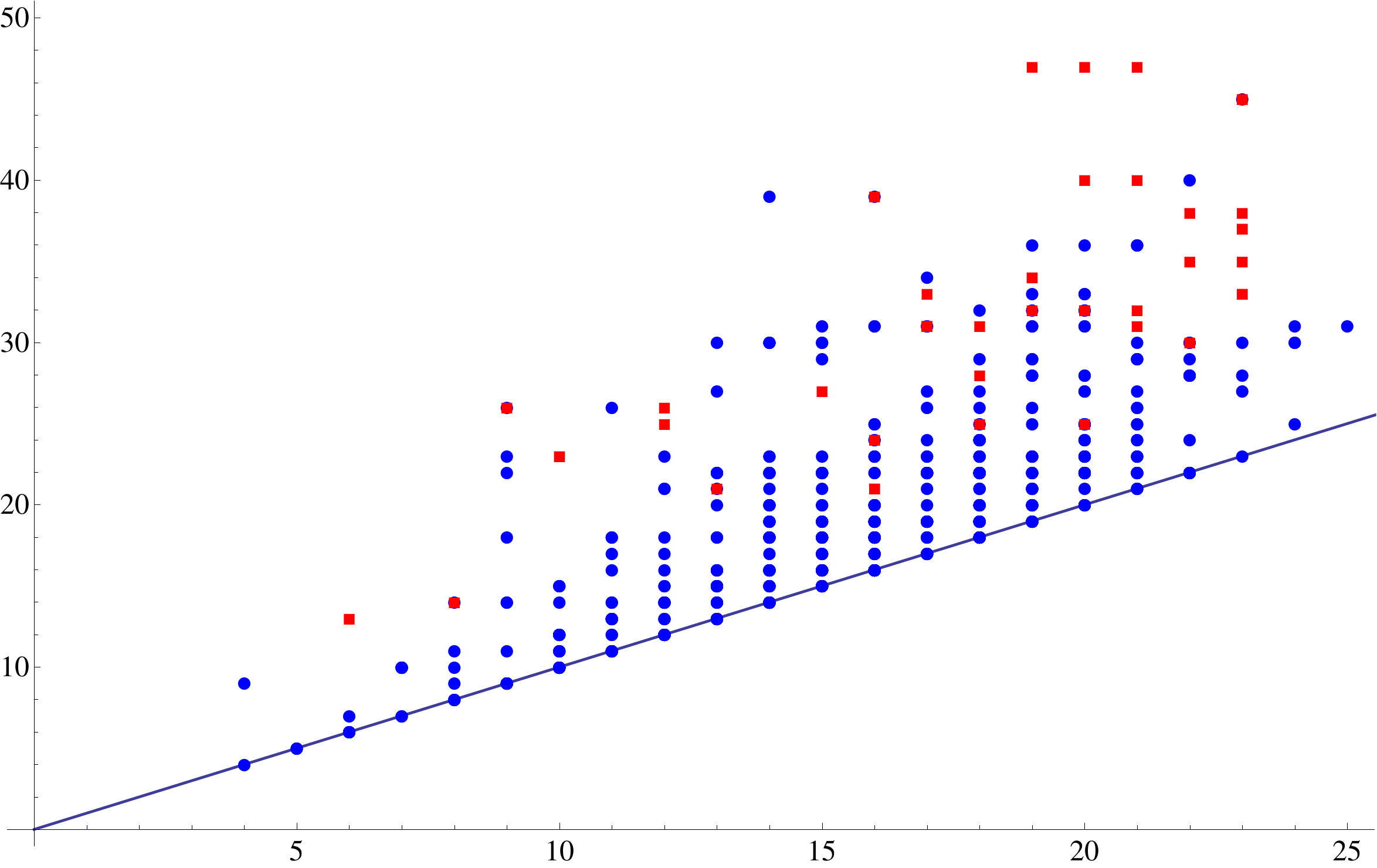}}
\caption{Vertical axis: number of tetrahedra in the veering triangulation. Horizontal axis: number of tetrahedra after the veering triangulation is simplified by SnapPy. Red squares indicate veering triangulations that SnapPy reports are non-geometric.}
\label{fig:tri_table}
\end{figure}

\section{A non-geometric example}\label{sec:nongeo}
In this section we describe the simplest example that we have found of a triangulation
that is non-geometric as reported by SnapPy, and give a rigorous proof that it is indeed non-geometric. This is a 13 tetrahedron triangulation $\mathcal{T}$ coming from the pseudo-Anosov homeomorphism $\varphi = T_{c_1} \circ T_{b_2} \circ T_{a_1} \circ T_{a_1} \circ T_{a_1} \circ T_{b_1} \circ T_{a_1}$ of the once-punctured genus 2 surface, where $T_{\gamma}$ denotes a Dehn twist in the curve $\gamma$ of Figure \ref{fig:twist_curves}. We computed a measured train track $\tau$ suited to the stable geodesic lamination of $\varphi$ which is shown in Figure \ref{fig:nongeo_tt}, where the octagon is punctured at the vertices, and edge identifications are determined by matching arrows on edges.

\begin{minipage}[t]{0.75\textwidth}
\begin{figure}[H]
\includegraphics[height=200pt]{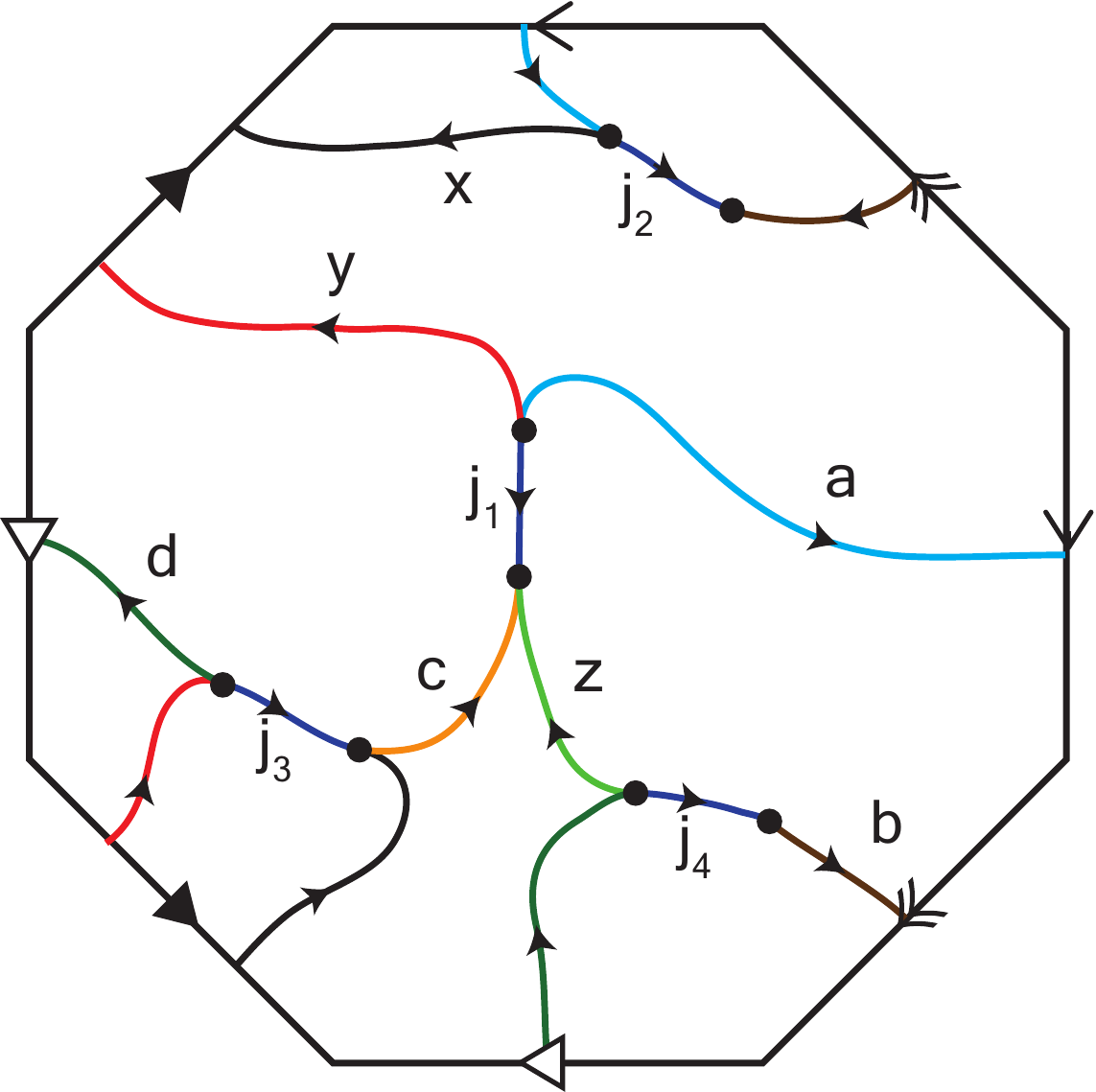}
\caption{Invariant train track $\tau$ obtained by performing the Bestvina-Handel algorithm.}
\label{fig:nongeo_tt}
\end{figure}
\end{minipage}
\begin{minipage}[t]{0.23\textwidth}
Approximate weights of branches:
$$\bordermatrix{ & \cr
								 j_1 & 1.34601 \cr
								 j_2 & 1.46574 \cr
								 j_3 & 1.30459 \cr
								 j_4 & 1.46574 \cr
								 a & 0.50717 \cr
								 b & 1.46574 \cr
								 c & 0.34601 \cr
								 d & 0.46574 \cr
								 x & 0.95857 \cr
								 y & 0.83885 \cr
								 z & 1.00000}.$$
\end{minipage}

A graph map representing $\varphi$ given by applying the Bestvina-Handel algorithm is given by $\mathfrak{g} : \tau \rightarrow \tau$:
\begin{multicols}{3}
\begin{eqnarray*}
a &\mapsto& d\ j_4\ b\ \overline{j_2}\ \overline{a}\ j_1\ \overline{z} \\
b &\mapsto& y\ j_3\ \overline{x}\ j_2\ \overline{b} \\
c &\mapsto& x \\
d &\mapsto& \overline{b}\ \overline{j_4}\ z
\end{eqnarray*}

\begin{eqnarray*}
x &\mapsto& z\ \overline{j_1}\ y\ j_3\ \overline{x}\ j_2\ \overline{b}\ \overline{j_4}\ z\ \overline{j_1}\ a \\
y &\mapsto& d\ j_4\ b \\
z &\mapsto& \overline{c} \\
\end{eqnarray*}

\begin{eqnarray*}
j_1 &\mapsto& j_3 \\
j_2 &\mapsto& j_4 \\
j_3 &\mapsto& \overline{j_2} \\
j_4 &\mapsto& \overline{j_1}
\end{eqnarray*}
\end{multicols}

The dilatation of $\varphi$ is $\lambda = 2.89005..$, which is the largest real root of $x^4 - 2x^3 - 2x^2 -2x + 1$. From the invariant train track we see that the invariant measured foliations have a $6$-prong singular point at the puncture, and that there are no other singular points. The mapping torus $M_\varphi$ is $1$-cusped and has hyperbolic volume $4.85117..$. It is identified as manifold s479 in the SnapPy census of manifolds, which has a geometric triangulation of $M_\varphi$ by $6$ ideal tetrahedra. The triangulation of the boundary torus of $M_\varphi$ is shown in Figure \ref{fig:dtor_btorus}.

\begin{figure}[H]
\centerline{\includegraphics[width=\columnwidth]{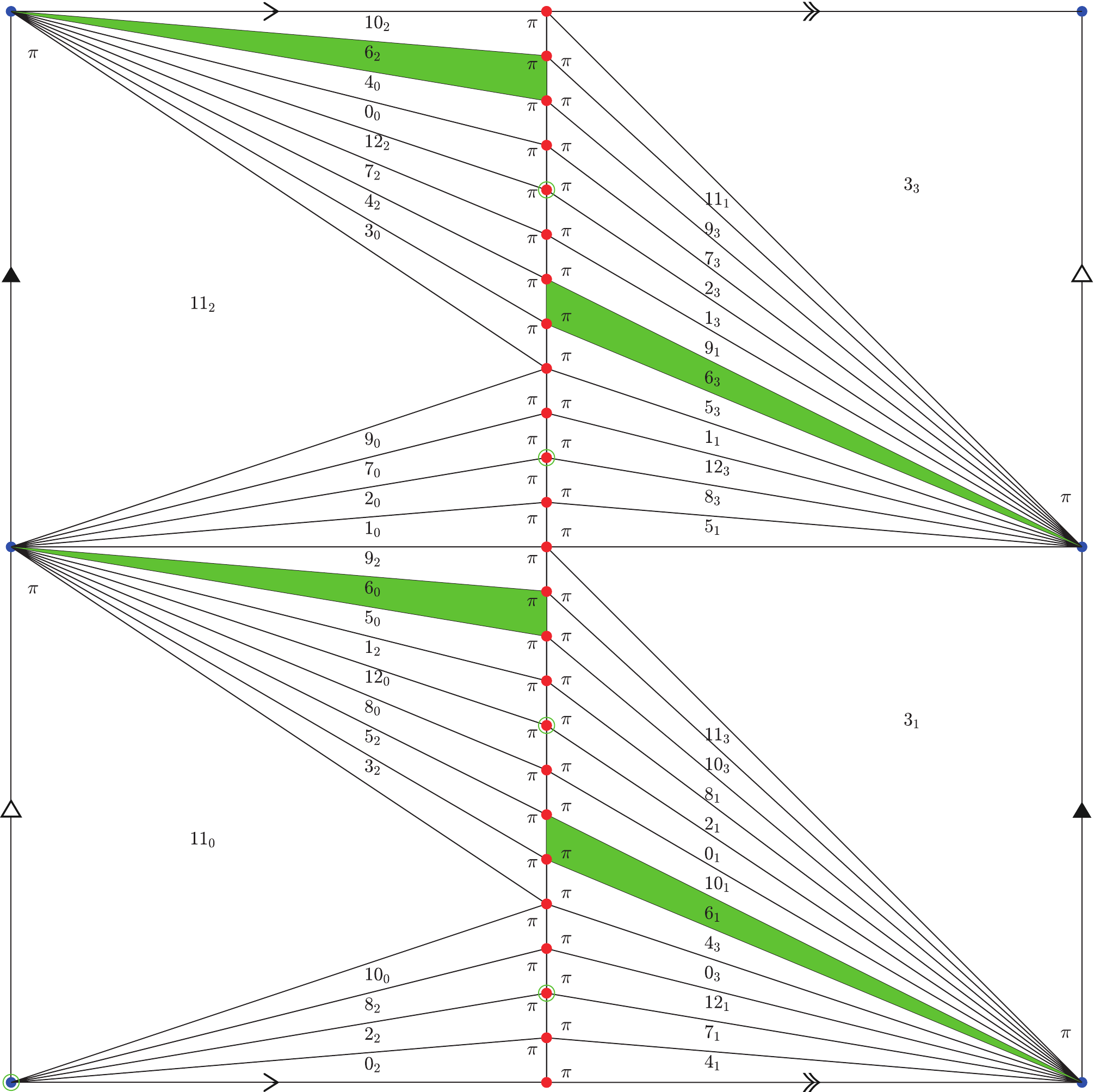}}
\caption{Induced triangulation of boundary torus of the triangulation $\mathcal{T}$. Arrows indicate edge identifications. SnapPy finds a solution to the {\em gluing and completeness equations} (see \cite{ThNotes}) where the shaded triangles correspond to the tetrahedron reported to be negatively oriented. The notation $6_3$ in a triangle means the truncated end of tetrahedron 6 at vertex 3.}
\label{fig:dtor_btorus}
\end{figure}

Generally, we can not immediately rule out the possibility that SnapPy finds a non-geometric solution although a geometric solution exists. We now outline how we rigorously verified that the triangulation $\mathcal{T}$ is non-geometric.

\begin{mydef} Let $\mathcal{T}$ be an ideal triangulation, with tetrahedron edge parameters given by a vector $\vec{z}$. The \emph{algebraic volume} $\mbox{vol}(\vec{z})$ of $\vec{z}$ is the sum of the signed volumes of hyperbolic tetrahedra with shapes given by $\vec{z}$, i.e. where a negatively oriented tetrahedron subtracts its volume from the sum. If it is clear which edge parameters we are referring to, we shall write $\mbox{vol}(\mathcal{T})$ for $\mbox{vol}(\vec{z})$.
\end{mydef}

\begin{thm}\label{thm:nongeo} Let $\mathcal{T}$ be an ideal triangulation of a hyperbolic 3-manifold $M$. Then there exists at most one solution $\vec{z}$ to the {\em gluing and completeness equations} (see \cite{ThNotes}) for $\mathcal{T}$ such that
$$\mbox{vol}(\vec{z}) = \mbox{vol}(M),$$
where $\mbox{vol}(M)$ is the hyperbolic volume of $M$.
\end{thm}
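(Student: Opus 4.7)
The plan is to deduce the theorem from the volume rigidity theorem for representations of cusped hyperbolic 3-manifold groups. The key principle is that the complete hyperbolic structure uniquely maximises volume among all ``shape'' solutions for a given topological ideal triangulation, so equality of $\mbox{vol}(\vec{z})$ with $\mbox{vol}(M)$ will force $\vec{z}$ to realise the complete structure, which determines $\vec{z}$ uniquely.

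First I would recall how any solution $\vec{z}$ to the gluing and completeness equations for $\mathcal{T}$ yields a holonomy representation $\rho_{\vec{z}}\colon\pi_1(M)\to \mathrm{PSL}(2,\mathbb{C})$. Concretely, lifting tetrahedra with the assigned shapes into $\mathbb{H}^3$ and tracking how face-pairings extend across the universal cover produces a developing map $D_{\vec{z}}\colon \widetilde{M}\to\mathbb{H}^3$ and an equivariant representation $\rho_{\vec{z}}$, well-defined up to conjugation; the gluing equations make the edges coherent, and the completeness equations force each peripheral subgroup to be sent to a parabolic subgroup. Next I would identify the algebraic volume of $\vec{z}$ with the volume of the representation $\rho_{\vec{z}}$: by construction, $\mathrm{vol}(\vec{z})$ is the signed sum of volumes of the developed tetrahedra covering a fundamental domain, which agrees with the evaluation of the Bloch--Wigner dilogarithm class against the fundamental cycle of $M$ pulled back by $\rho_{\vec{z}}$.

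The crucial step is then to apply the volume rigidity theorem for representations of cusped hyperbolic 3-manifolds (in the form proved by Francaviglia, generalising Gromov--Thurston and Dunfield): for any representation $\rho\colon\pi_1(M)\to \mathrm{PSL}(2,\mathbb{C})$ sending peripheral subgroups to parabolics,
\[
\mathrm{Vol}(\rho)\;\le\;\mathrm{vol}(M),
\]
with equality if and only if $\rho$ is conjugate to the discrete faithful holonomy representation $\rho_0$ of the complete hyperbolic structure on $M$. Combining this with the identification $\mathrm{Vol}(\rho_{\vec{z}})=\mathrm{vol}(\vec{z})$, any solution with $\mathrm{vol}(\vec{z})=\mathrm{vol}(M)$ has $\rho_{\vec{z}}$ conjugate to $\rho_0$.

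Finally, I would observe that the representation determines $\vec{z}$. Each ideal tetrahedron of $\mathcal{T}$ lifts to a tetrahedron in $\mathbb{H}^3$ whose four ideal vertices are the images of the corresponding cusp lifts under a fixed developing map; its shape parameter is the cross-ratio of these four points. Hence two solutions giving conjugate representations produce the same tuple of shape parameters, and $\vec{z}$ is uniquely determined. The main obstacle is the volume rigidity input and the identification of $\mathrm{vol}(\vec{z})$ with the representation volume in the cusped setting; once these are granted, uniqueness is immediate.
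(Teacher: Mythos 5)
Your proposal is correct and follows essentially the same route as the paper, which derives this theorem directly from Francaviglia's volume rigidity results (\cite[Remark 4.1.20]{Stefano} and \cite[Theorem 5.4.1]{Stefano}): the identification of the algebraic volume $\mathrm{vol}(\vec{z})$ with the volume of the holonomy representation, and the rigidity bound $\mathrm{Vol}(\rho)\le\mathrm{vol}(M)$ with equality only for the discrete faithful representation. Your closing observation — that a representation conjugate to the discrete faithful one pins down each shape parameter as a cross-ratio of parabolic fixed points — is exactly the step the paper leaves implicit in its citation.
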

Theorem \ref{thm:nongeo} is a corollary of \cite[Remark 4.1.20]{Stefano} and \cite[Theorem 5.4.1]{Stefano} (see the last dot point of page 6.)

\begin{corol}\label{nongeo} Let $\vec{z}$ be a solution to the gluing and completeness equations for an ideal triangulation $\mathcal{T}$ of a hyperbolic 3-manifold $M$ such that 
\begin{enumerate}
	  \setlength{\itemsep}{1pt}
  \setlength{\parskip}{0pt}
  \setlength{\parsep}{0pt}
		\item $\mbox{vol}(\vec{z}) = \mbox{vol}(M)$, and
		\item $\vec{z}$ is non-geometric, i.e. at least one of the edge parameters has non-positive imaginary part.
	\end{enumerate}
	\vspace{-2mm}
		Then $\mathcal{T}$ is non-geometric, that is, there is no solution to the gluing and completeness equations with all tetrahedra positively oriented.
\end{corol}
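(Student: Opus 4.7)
The plan is a short proof by contradiction that is essentially a direct invocation of Theorem \ref{thm:nongeo}. Suppose that $\mathcal{T}$ is geometric. Then by definition there exists a solution $\vec{w}$ to the gluing and completeness equations for $\mathcal{T}$ in which every edge parameter has strictly positive imaginary part, and whose associated hyperbolic structure is the complete hyperbolic structure on $M$. Because every tetrahedron in this solution is positively oriented, $\mbox{vol}(\vec{w})$ is the sum of the ordinary (positive) volumes of the ideal hyperbolic tetrahedra making up $M$, which equals $\mbox{vol}(M)$.

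Now I have two solutions to the gluing and completeness equations for $\mathcal{T}$, namely $\vec{z}$ and $\vec{w}$, each satisfying $\mbox{vol}(\vec{z}) = \mbox{vol}(\vec{w}) = \mbox{vol}(M)$. By Theorem \ref{thm:nongeo}, there is at most one such solution, so $\vec{z} = \vec{w}$. But by hypothesis (2), $\vec{z}$ has at least one edge parameter with non-positive imaginary part, whereas every edge parameter of $\vec{w}$ has strictly positive imaginary part. This contradiction forces the conclusion that $\mathcal{T}$ admits no solution with all tetrahedra positively oriented, i.e.\ $\mathcal{T}$ is non-geometric.

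There is essentially no obstacle: the only substantive ingredient beyond Theorem \ref{thm:nongeo} is the observation that a geometric solution automatically has algebraic volume equal to $\mbox{vol}(M)$, which is immediate from the definition of algebraic volume together with the fact that a geometric triangulation decomposes $M$ into positively oriented ideal tetrahedra realising the complete hyperbolic structure. The corollary is thus an almost immediate consequence of the uniqueness statement in Theorem \ref{thm:nongeo}, and I would write the argument in a few lines as above.
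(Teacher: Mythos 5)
Your proof is correct and is essentially identical to the paper's: assume a geometric solution $\vec{w}$ exists, note that $\mbox{vol}(\vec{w}) = \mbox{vol}(M)$, invoke Theorem \ref{thm:nongeo} to conclude $\vec{z} = \vec{w}$, and contradict the non-geometric assumption on $\vec{z}$. There is nothing to add.
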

\begin{proof} If $\vec{z'}$ is a geometric solution to the gluing and completeness equations, then $\mbox{vol}(\vec{z'}) = \mbox{vol}(M)$. Hence, by Theorem \ref{thm:nongeo} we have $\vec{z} = \vec{z'}$, contradicting that $\vec{z}$ is non-geometric.
\end{proof}

By Corollary \ref{nongeo}, in order to show that $\mathcal{T}$ is non-geometric it suffices to find a non-geometric solution to the gluing and completeness equations which has algebraic volume equal to the volume of $M$. We find an exact non-geometric solution to the gluing and completeness equations using the computer program Snap \cite{Snap}, then verify that it has algebraic volume equal to the volume of $M$.

If $\vec{z}$ consists of the edge parameters to a solution of the gluing and completeness equations then the numbers $z_i$ are in fact algebraic numbers. The computer program Snap attempts to find such a solution, expressing each edge parameter $z_i$ as a polynomial in a number field $\Q(\tau)$, where $\tau$ is an algebraic number. In order to specify $\tau$ exactly, Snap provides the minimal polynomial $m(x)$ of $\tau$ as well as calculates $\tau$ to sufficiently many decimal places to uniquely specify it as the root of $m(x)$ closest to the decimal approximation. See \cite{MR1758805} for more information about Snap.

\begin{mydef}\cite[\S 3.1]{GenDef} Let $\mathcal{T}$ be a triangulation. A \emph{Pachner 2-3 move} is a move at a common face of two tetrahedra in $\mathcal{T}$ which produces the triangulation $\mathcal{T}'$, obtained by removing the face and inserting a dual edge, see Figure \ref{fig:pachner_moves}. If the tetrahedra in $\mathcal{T}$ are assigned edge parameters, then the three new tetrahedra in $\mathcal{T}'$ are assigned edge parameters as shown in Figure \ref{fig:pachner_moves}. A \emph{Pachner 3-2 move} is the reverse of a Pachner 2-3 move, that is, an edge with three surrounding tetrahedra is replaced by a dual face.
\end{mydef}

\begin{figure}[H]
\centerline{\includegraphics[height=180pt]{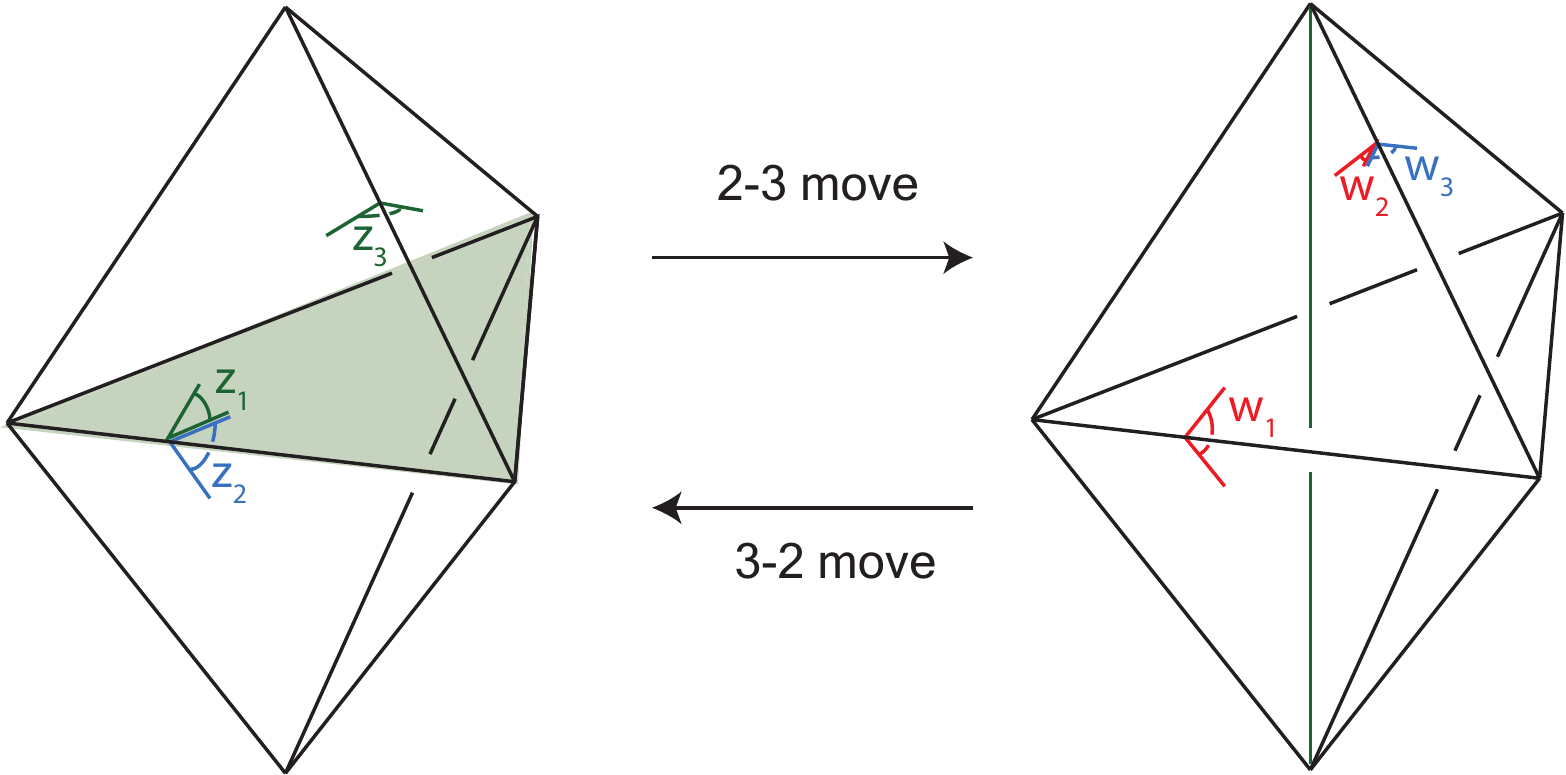}}
\caption{Pachner 2-3 and 3-2 moves. On the left two tetrahedra have a common face and on the right three tetrahedra have a common edge, shown in green. The edge parameters are related by: $z_3 = w_2 w_3$ and $w_1 = z_1 z_2$, and similar relations at the other edges allow us to determine the shapes of each tetrahedron.}
\label{fig:pachner_moves}
\end{figure}

\begin{thm}\label{thm:volpres} Let $M$ be a hyperbolic 3-manifold and let $\mathcal{T}$ be an ideal triangulation of $M$. Let $\vec{z}$ be a solution to the gluing and completeness equations for $\mathcal{T}$, where we allow negatively oriented and flat tetrahedra, i.e. $z_i \in \C\backslash\{0,1\}$ for each edge parameter $z_i$. Let $\mathcal{T}'$ be the ideal triangulation obtained from $\mathcal{T}$ by a 2-3 or 3-2 Pachner move, and let $\vec{z'}$ be the corresponding edge parameters. If the edge parameters $\vec{z'}$ define non-degenerate tetrahedra, i.e. no edge parameter is equal to $0, 1$ or $\infty$, then the algebraic volumes of $\vec{z}$ and $\vec{z'}$ are equal.
\end{thm}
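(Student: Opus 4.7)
The plan is to identify the algebraic volume with a sum of values of the Bloch--Wigner dilogarithm $D \colon \mathbb{C} \setminus \{0,1\} \to \mathbb{R}$, and then derive the invariance under Pachner 2-3 moves from the classical five-term relation satisfied by $D$. Recall that $D$ is real-analytic on $\mathbb{C}\setminus\{0,1\}$, extends continuously to $\{0,1,\infty\}$ by zero, is invariant under the three-fold cyclic symmetry $z \mapsto 1/(1-z) \mapsto (z-1)/z$ permuting shape parameters assigned to edges of a fixed ideal tetrahedron, and satisfies $D(\bar z) = -D(z)$. Hence $D(z)$ is precisely the signed hyperbolic volume of an ideal tetrahedron with shape parameter $z$: positive for $\operatorname{Im} z > 0$, negative for $\operatorname{Im} z < 0$, and zero for flat tetrahedra with $z \in \mathbb{R} \setminus \{0,1\}$. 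In particular $\mbox{vol}(\vec z) = \sum_i D(z_i)$.

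My first step would be to set up the geometric picture of the 2-3 move. The two tetrahedra on the left in Figure \ref{fig:pachner_moves} share a common triangular face, so together they span $5$ ideal vertices on $\partial \mathbb{H}^3 \cong \mathbb{CP}^1$, and the three tetrahedra on the right fill the same $5$-vertex configuration after regrouping around the new dual edge. Normalising three of these five ideal points to $0, 1, \infty$, the other two are parametrised by the shape parameters $z_1, z_2$ of the left-hand tetrahedra. A direct cross-ratio computation then forces the new parameters $w_1, w_2, w_3$ to be exactly the quantities given by the algebraic relations in Figure \ref{fig:pachner_moves} (for example $w_1 = z_1 z_2$, $z_3 = w_2 w_3$, and the analogous relations at the remaining edges). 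This is where the hypothesis that $\vec{z'}$ defines only non-degenerate tetrahedra enters: it is precisely the condition that no two of the five ideal vertices coincide.

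I would then invoke a standard form of the five-term relation, say
\[
D(x) + D(y) + D\!\left(\tfrac{1-x}{1-xy}\right) + D(1-xy) + D\!\left(\tfrac{1-y}{1-xy}\right) = 0,
\]
and match variables so that the five summands correspond, with signs supplied by the coorientations of the tetrahedra, to the two tetrahedra of $\mathcal{T}$ and the three tetrahedra of $\mathcal{T}'$ meeting at the new edge. The resulting identity is exactly
\[
D(z_1) + D(z_2) = D(w_1) + D(w_2) + D(w_3),
\]
which is $\mbox{vol}(\vec z) = \mbox{vol}(\vec{z'})$ for the 2-3 move. The 3-2 move is its inverse and so follows automatically.

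The main obstacle is the bookkeeping in the middle step: choosing coordinates on $\partial \mathbb{H}^3$ compatible with the labelling of Figure \ref{fig:pachner_moves}, verifying that the five cross-ratios one obtains are precisely those appearing in the five-term relation, and tracking signs carefully (since the two sides of the move induce opposite coorientations on the shared triangular face, and since the three-fold edge symmetry of $D$ must be used to match edge parameters to the correct argument of $D$). Once these identifications are set up, the analytic input is minimal: the five-term relation is an algebraic identity valid wherever all five arguments lie in $\mathbb{C}\setminus\{0,1\}$, which the non-degeneracy hypothesis $z_i, w_j \notin \{0, 1, \infty\}$ ensures, and $D$ is real-analytic on this locus so no continuity or monodromy subtleties obstruct extending the relation from the all-positively-oriented regime to the general case of negatively oriented or flat tetrahedra allowed here.
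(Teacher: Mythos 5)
Your proposal is correct and takes essentially the same approach as the paper: the paper also derives this from the five-term relation for the dilogarithm, citing Neumann's work rather than spelling out the cross-ratio bookkeeping that you outline.
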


This theorem is a consequence of the ``five-term relation," an identity of the dilogarithm function, see paragraph 2 in the proof of Proposition 10.1 in \cite{MR1663915} for further discussion.

\begin{prop}\label{prop:nongeo} Let $\varphi : S \rightarrow S$ be the pseudo-Anosov homeomorphism $\varphi = T_{c_1} \circ T_{b_2} \circ T_{a_1} \circ T_{a_1} \circ T_{a_1} \circ T_{b_1} \circ T_{a_1}$ of the once-punctured genus 2 surface. Let $\mathcal{T}$ be the veering triangulation of the mapping torus $M_\varphi$ with respect to $\varphi$. Then $\mathcal{T}$ is non-geometric.
\end{prop}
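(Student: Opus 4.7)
The strategy is dictated by Corollary~\ref{nongeo}: it suffices to exhibit an exact (algebraic) solution $\vec{z}$ to the gluing and completeness equations for $\mathcal{T}$ for which (1) some coordinate $z_i$ has non-positive imaginary part and (2) $\mathrm{vol}(\vec{z}) = \mathrm{vol}(M_\varphi)$. The numerical evidence from SnapPy already indicates that a non-geometric solution exists (one tetrahedron is reported as negatively oriented, cf.\ Figure~\ref{fig:dtor_btorus}), so the first half is about turning the numerical solution into a certified algebraic one, and the second half is about equating the algebraic volume with the true hyperbolic volume of $M_\varphi$.

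For step (1), the plan is to feed $\mathcal{T}$ to the program Snap, which will simultaneously produce a number field $\Q(\tau)$, a minimal polynomial $m(x)$ of $\tau$, a high-precision decimal approximation of $\tau$ pinning it down as a unique root of $m(x)$, and expressions for each edge parameter $z_i \in \Q(\tau)$. One then algebraically verifies that these $z_i$ satisfy every gluing equation and every completeness equation exactly (these become polynomial identities in $\Q(\tau)$, hence reduce to checking that certain polynomials are divisible by $m(x)$). Having certified $\vec{z}$ as an exact solution, one then certifies non-geometricity by computing, using interval arithmetic on $\tau$, the sign of $\mathrm{Im}(z_i)$ for the offending tetrahedron; since SnapPy already locates this tetrahedron numerically, one only needs high enough precision to separate $\mathrm{Im}(z_i)$ from $0$.

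For step (2), the key tool is Theorem~\ref{thm:volpres}: algebraic volume is preserved under any Pachner $2$--$3$ or $3$--$2$ move, provided no intermediate tetrahedron degenerates. Since $M_\varphi$ is the census manifold s479, it carries a canonical $6$-tetrahedron geometric triangulation $\mathcal{T}_0$ (with shapes $\vec{z}_0$) for which, trivially, $\mathrm{vol}(\vec{z}_0) = \mathrm{vol}(M_\varphi)$. The plan is to exhibit an explicit finite sequence of Pachner $2$--$3$ and $3$--$2$ moves connecting $\mathcal{T}$ to $\mathcal{T}_0$; such a sequence can be searched for with standard Regina/SnapPy triangulation-simplification routines and then recorded. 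Propagating the exact shapes $\vec{z}$ along the sequence using the transformation rules of Figure~\ref{fig:pachner_moves}, and checking at each step that no edge parameter becomes $0$, $1$, or $\infty$ (again a finite check in $\Q(\tau)$), we conclude by repeated application of Theorem~\ref{thm:volpres} that $\mathrm{vol}(\vec{z}) = \mathrm{vol}(\vec{z}_0) = \mathrm{vol}(M_\varphi)$.

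The expected main obstacle is the verification that no intermediate Pachner move produces a degenerate tetrahedron: the transformation formulas for the shape parameters involve products and quotients, and an unlucky path between $\mathcal{T}$ and $\mathcal{T}_0$ could pass through a triangulation on which $\vec{z}$ forces some parameter to hit $\{0,1,\infty\}$. This is not a theoretical problem (one can always try another path of moves) but it is the step that must be performed with care and checked rigorously in the chosen number field rather than numerically. Once such a valid path is fixed, the argument is mechanical: apply Corollary~\ref{nongeo} to the certified non-geometric solution $\vec{z}$ of algebraic volume $\mathrm{vol}(M_\varphi)$ to conclude that $\mathcal{T}$ admits no geometric solution, proving the proposition.
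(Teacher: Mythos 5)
Your overall strategy matches the paper's: extract an exact non-geometric solution $\vec z$ from Snap, relate its algebraic volume to $\mathrm{vol}(M_\varphi)$ via Pachner moves and Theorem~\ref{thm:volpres}, then invoke Corollary~\ref{nongeo}. Two points are worth flagging.

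First, a difference of execution: the paper does not aim for the $6$-tetrahedron census triangulation of s479. It runs a short Pachner path, $\mathcal{T} \to \mathcal{T}_1 \to \mathcal{T}_2 \to \mathcal{T}_3 \to \mathcal{T}_4$ (one $2$--$3$ move, then three $3$--$2$ moves), and stops at the $11$-tetrahedron triangulation $\mathcal{T}_4$ once the propagated shapes there turn out to be all positively oriented. What is needed is only that the terminal triangulation carry the \emph{geometric} solution, not that it be the smallest available triangulation; targeting the census triangulation would require a longer path and more non-degeneracy checks for no gain.

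Second, a genuine gap in your closing inference. You propagate $\vec z$ along the Pachner moves and write $\mathrm{vol}(\vec z) = \mathrm{vol}(\vec z_0) = \mathrm{vol}(M_\varphi)$. But Theorem~\ref{thm:volpres} only gives $\mathrm{vol}(\vec z) = \mathrm{vol}(\vec z')$, where $\vec z'$ is whatever solution of the gluing and completeness equations the propagation produces on $\mathcal{T}_0$. Nothing you have verified forces $\vec z' = \vec z_0$: the equations on a fixed triangulation can admit several solutions, and by Theorem~\ref{thm:nongeo} any solution other than $\vec z_0$ has algebraic volume \emph{different} from $\mathrm{vol}(M_\varphi)$. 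The degeneracy checks you perform along the path (that no edge parameter hits $0,1,\infty$) are needed for Theorem~\ref{thm:volpres} to apply but do not identify $\vec z'$ with $\vec z_0$. The missing step, which the paper carries out explicitly, is to verify directly that the propagated terminal shapes are all positively oriented; then the terminal triangulation equipped with those shapes is geometric, so its algebraic volume equals $\mathrm{vol}(M_\varphi)$, and the argument closes.
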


We outline the steps we took to verify Proposition \ref{prop:nongeo}.
We found a sequence $$\mathcal{T} \stackrel{2-3}{\longrightarrow} \mathcal{T}_1 \stackrel{3-2}{\longrightarrow} \mathcal{T}_2 \stackrel{3-2}{\longrightarrow} \mathcal{T}_3 \stackrel{3-2}{\longrightarrow} \mathcal{T}_4,$$
of Pachner 2-3 and 3-2 moves starting with an exact non-geometric solution to Agol's triangulation $\mathcal{T}$, as given by Snap. We verified that the edge parameters of $\mathcal{T}_4$ are all positively-oriented, so that $\mathcal{T}_4$ is geometric and thus $\mbox{vol}(\mathcal{T}_4)$ equals the hyperbolic volume of $M_\varphi$. Furthermore, for $i=1,2,3$, we checked that none of the tetrahedron shapes of $\mathcal{T}_i$ are degenerate. Hence, by Theorem \ref{thm:volpres}, $\mbox{vol}(\mathcal{T}) = \mbox{vol}(\mathcal{T}_4)$. Therefore $\mbox{vol}(\mathcal{T})$ equals the hyperbolic volume of $M_\varphi$. Thus, by Corollary \ref{nongeo} Agol's triangulation $\mathcal{T}$ is non-geometric, as required.

\begin{remark} We expect that all the other examples in Tables \ref{table:nongeo1} and \ref{table:nongeo2} of Appendix \ref{app:tables} are also non-geometric. However, we have not shown this rigorously in the other cases.
\end{remark}

\section{Further work}
Although we have found that veering triangulations are not always geometric, we may ask:

{\bf Question:} Given a veering triangulation coming from Agol's construction, can we find positively-oriented hyperbolic ideal tetrahedra shapes for a (possibly incomplete) hyperbolic structure?

This is equivalent to finding a solution to the gluing equations where every edge parameter has positive imaginary part. Such a solution corresponds to a point in Dehn surgery space. Figure \ref{fig:dtor[5,4,5,5,5,2,3].tri} is obtained by using SnapPy to numerically solve the gluing and $(p,q)$-Dehn surgery equations and colouring each point $(p,q)$. A point is coloured green when a solution with all tetrahedra positively oriented is found, and blue when SnapPy finds a solution where at least one tetrahedron is negatively oriented. We see that even in this non-geometric example SnapPy finds positively-oriented hyperbolic ideal tetrahedra shapes for incomplete structures of the bundle. The complete hyperbolic structure corresponds to the point at infinity.

\begin{figure}[H]
\centerline{\includegraphics[height=250pt]{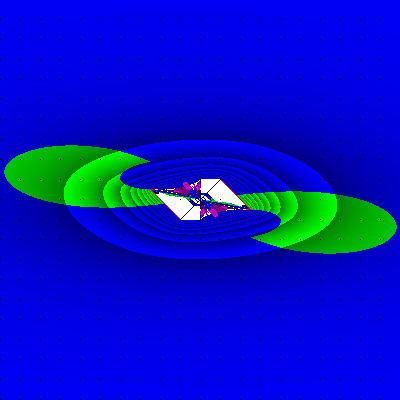}}
\caption{Dehn surgery space for $13$ tetrahedron non-geometric example.}
\label{fig:dtor[5,4,5,5,5,2,3].tri}
\end{figure}

\appendix

\section{Computing the graph map}\label{sec:comp_graph_map}
Let $S$ be a surface of genus $g \ge 0$ with $p > 0$ punctures. Let $f : S \rightarrow S$ be a homeomorphism permuting the punctures, given by a composition $$f = T_{n} \circ \cdots \circ T_{2} \circ T_{1},$$
where each of $T_1, \ldots, T_n$ is either a left or right Dehn twist in one of the curves shown in Figure \ref{fig:twist_curves}, or a half-twist permuting adjacent punctures $i$ and $i+1$, where $i \in \{1,2,\ldots,p-1\}$ (see Figure \ref{fig:half_twist}). The Dehn twists in curves shown in Figure \ref{fig:twist_curves} and half-twists in adjacent punctures generate the mapping class group of $S$.

Let $H \subseteq S$ be a graph homotopy equivalent to $S$. Then $f$ induces a homotopy equivalence $\mathfrak{g} : H \rightarrow H$ and conversely the isotopy class of $f$ is uniquely determined by $\mathfrak{g}$. By a homotopy if necessary, we assume that $\mathfrak{g}$ maps vertices to vertices and oriented edges to edge paths, i.e. that $\mathfrak{g}$ is a graph map representing $f$.  In this section we briefly outline how we compute such a map $\mathfrak{g}$ representing $f$, which is the starting point for the Bestvina-Handel algorithm executed by Trains. 

We compute $\mathfrak{g}$ in two steps, first we obtain a map $\mathfrak{g}' : G \rightarrow G$, where $G \subset S$ is the graph shown in Figure \ref{fig:graph_on_surface}. Note that some complementary regions of $G$ are disks, rather than punctured disks, so that $G$ is not homotopy equivalent to $S$ and $\mathfrak{g}'$ is not a graph map representing $f$ in the usual sense. Our next step will be to modify $G$, obtaining a graph $H \subset G$ homotopy equivalent to $S$ and to modify $\mathfrak{g}'$ to obtain a graph map $\mathfrak{g} : H \rightarrow H$ representing $f$. The reason we start with this larger graph $G$ is that the action of our chosen generators of the mapping class group $\mbox{MCG}(S)$ on $G$ is easier to compute.

\begin{figure}[H]
\centerline{\includegraphics[width=\textwidth]{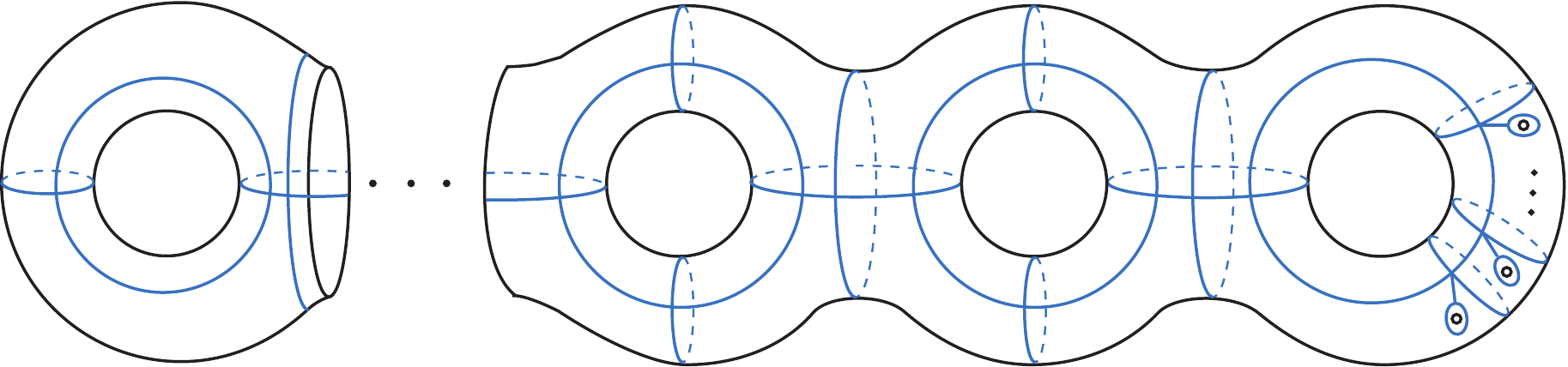}}
\caption{The vertices of the graph are the intersections of the blue curves.}
\label{fig:graph_on_surface}
\end{figure}
In practice, for convenience, instead of $G$ we actually begin with a graph slightly smaller than $G$, given by collapsing some of the edges of $G$, however for clarity we illustrate the essentially identical procedure with $G$.

For $i \in \{1,\ldots,n\}$, we can homotope $T_i$ so that it maps $G$ into $G$ with vertices mapping to vertices and edges mapping to edge paths. The resulting map restricts to a map on $G$. We define $\widehat{T}_i : G \rightarrow G$ to be some choice of such a map (any choice will do.) Then $\mathfrak{g}' : G \rightarrow G$ is given by $\mathfrak{g}' = \widehat{T}_{n} \circ \cdots \circ \widehat{T}_{2} \circ \widehat{T}_{1}$. More precisely, assume first that $T_i$ is a Dehn twist in some curve $\gamma$ of Figure \ref{fig:twist_curves}, supported in a small closed regular neighbourhood $N(\gamma)$ of $\gamma$. We identify $\gamma$ with the natural choice of loop in $G$. Let $h : S \rightarrow S$ be a map homotopic to the identity which retracts the annulus $N(\gamma)$ onto the circle $\gamma$ and is the identity outside of a small open regular neighbourhood of $N(\gamma)$, so that $h \circ T_i$ restricts to a map $\widehat{T}_i := h \circ T_i : G \rightarrow G$ as required. If $T_i$ is an anticlockwise half twist in punctures $j$ and $j+1$ then we can homotope $T_i$ so that the image of $G$ is as shown in Figure \ref{fig:punc_perm_graph}. Then we can further homotope $T_i$ so that it collapses edges drawn as parallel in Figure \ref{fig:punc_perm_graph} onto the corresponding edge of $G$, so that $T_i$ restricts to a map $\widehat{T}_i : G \rightarrow G$ as required. The case where $T_i$ is a clockwise half twist is similar.

\begin{figure}[H]
\centerline{\includegraphics[width=\textwidth]{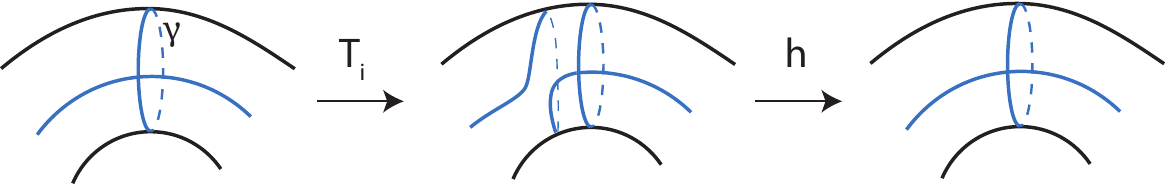}}
\caption{}
\label{fig:dehn_twist}
\end{figure}

\begin{figure}[H]
\centerline{\includegraphics[height=150pt]{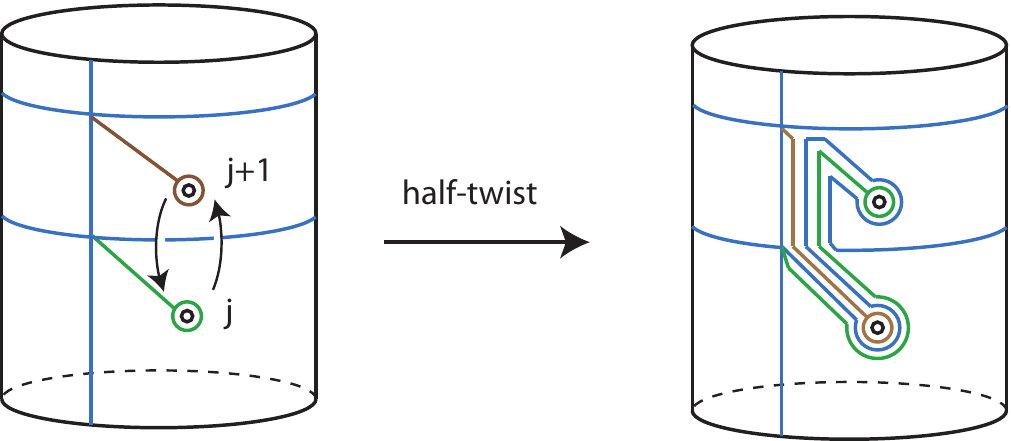}}
\caption{}
\label{fig:punc_perm_graph}
\end{figure}

Let $R$ be a complementary region of $G \subset S$ which is homeomorphic to a disk. Then $\partial R$ can be represented as a cycle $C = e_1, e_2, \ldots, e_m$ of edges (Figure \ref{fig:comp_region_uni_cover}.) Let $e$ be an edge which appears exactly once in $C$ such that $\overline{e}$ is not in $C$. Note that such an $e$ exists since if every edge of $C$ appeared in pairs then gluing along the boundary of $R$ would give a closed surface, contrary to the fact that $S$ is a connected punctured surface. Without loss of generality, assume that $e = e_1$. Let $G_1 = G\backslash e$ be the graph obtained by removing $e$ (including any degree 1 vertex incident with $e$.)

\begin{figure}[H]
\centerline{\includegraphics[height=150pt]{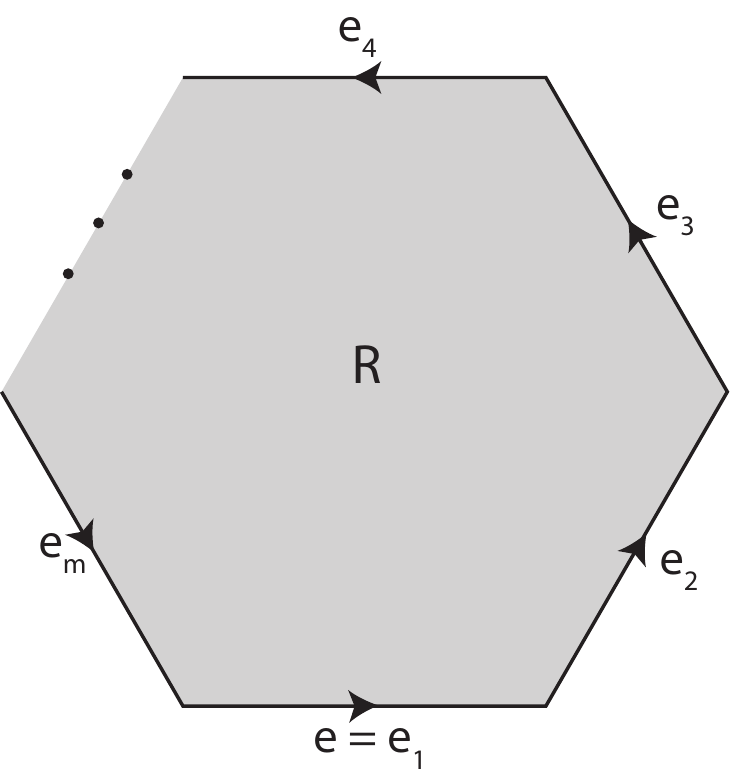}}
\caption{}
\label{fig:comp_region_uni_cover}
\end{figure}

There exists a map $h : S \rightarrow S$ homotopic to the identity such that $h(e) = \overline{e}_m\,\overline{e}_{m-1}\cdots \overline{e}_1$, given by homotoping $e$ across $R$. The map $h$ restricts to a map $h : G_1 \rightarrow G_1$ and $h \circ \mathfrak{g}'_1: G \rightarrow G_1$ restricts to a map $\mathfrak{g}'_1 = \mathfrak{g}' \circ h : G_1 \rightarrow G_1$. 

By repeating the above procedure on the pair $(\mathfrak{g}'_1, G_1)$ to obtain $(\mathfrak{g}'_2, G_2)$ and so on we eventually obtain a graph map $\mathfrak{g}_k' : G_k \rightarrow G_k$ representing $f$, where $k \in \Z_{>0}$ and $G_k \subset G$ is homotopy equivalent to $S$. Note that each time this procedure is performed the number of complementary regions homeomorphic to a disk is reduced, so this process eventually terminates.

\begin{remark}The homeomorphism $f$ permutes the punctures of $S$. Let $n \in \Z_{>0}$ be the number of orbits of punctures under $f$. Notice that in our initial graph $G$ there is a peripheral loop bounding each puncture. The Bestvina-Handel algorithm, which Trains implements, requires that the graph map given as input is on a graph which contains a peripheral loop about each puncture in $n-1$ orbits of punctures and that these peripheral loops are disjoint. We arbitrarily choose $n-1$ orbits of punctures of $G$ and when we iteratively perform the above procedure we ensure that the peripheral loops about punctures in these orbits are not removed.
\end{remark}

\section{Checking a triangulation for a veering structure}\label{sec:veering_check}

Given a triangulation with a taut angle structure, it is very easy to check whether or not it can be given consistent coorientations: Choose a starting tetrahedron $T$, and choose one of the two possible coorientations for that tetrahedron. This determines coorientations for the faces of the neighbours of $T$. We continue assigning coorientations to the tetrahedra, spreading through the tetrahedra of the triangulation. We are able to finish this process without finding contradictory coorientations on a tetrahedron if and only if the taut angle structure can be promoted to a taut structure.

For the experimental results given in \cite{MR2860987} we used an algorithm to find any veering structures on a given triangulation. This was also useful as a check to make sure that the algorithm given in Section \ref{chapter:implementation} does indeed produce veering triangulations. The techniques developed by Ben Burton and implemented in Regina \cite{regina} can likely be modified to search for veering structures very efficiently. However, we used a more naive procedure, which still seems to be very fast.

Each tetrahedron in a triangulation can potentially be taut (without coorientations) in three different ways, corresponding to the three pairs of opposite edges that can have $\pi$ angles assigned to them. Essentially, we do a brute force search through the $3^N$ possible choices of taut structure on each of the $N$ tetrahedra in the triangulation. However, we can very effectively prune this search when we are looking for a veering structure, because the veering condition is so stringent. 

We consider the tree of possible partial veering structures, where the root of the tree has no taut structure on any of the tetrahedra, and the leaves of the tree each have a taut structure assigned to every tetrahedron. We start at the root and traverse towards the leaves, checking at each step if
\begin{enumerate} 
\item we have an inconsistent colouring of edges, or 
\item we have determined the taut structure at every tetrahedron incident to an edge, but with a sum of angles at that edge that is not equal to $2\pi$.
\end{enumerate}
The second of these two conditions requires many tetrahedra to have determined taut structures, but the first can show an immediate contradiction with only two tetrahedra, or even one given self-identifications. Given such a contradiction we search no further along this branch of the tree. This vastly cuts down the search space, particularly if the tetrahedra (and so the levels of the tree) are ordered so that tetrahedra that are close to each other (or consecutive) in the list are close to each other (or incident) in the triangulation. Running on a laptop (Macbook Pro, 2.5 GHz processor), the algorithm checked a 330 tetrahedron triangulation and found the one veering structure in 90 minutes. 

\section{Tables}\label{app:tables}
We enumerated all pseudo-Anosov homeomorphisms of the once-punctured genus two surface (up to conjugation and inversion in the mapping class group) which can be expressed as a composition of at most $7$ Dehn twists in the curves $a_1, b_1, b_2, c_1$ and $c_2$ of Figure \ref{fig:twist_curves}. Note that we have not included Dehn twists in $e_1$; these can be expressed in terms of Dehn twists in the curves chosen. Tables \ref{table:s1} and \ref{table:s2} include the beginning of the complete list of data we have available; the complete list contains $603$ mapping classes. The tables are sorted first by hyperbolic volume, then by dilatation (both up to $5$ decimal places.) We have included a complete list of the mapping classes for which SnapPy reports the triangulation is non-geometric; these are given in Tables \ref{table:nongeo1} and \ref{table:nongeo2}.

Let $\varphi : S \rightarrow S$ be a pseudo-Anosov homeomorphism of a once-punctured genus two surface, and let $M = M_{\varphi^\circ}$, where $\varphi^\circ$ is the restriction of $\varphi$ to the surface with all singular points of the invariant foliations removed. As a check, for each $\varphi$ in our table below, using SnapPy we perform Dehn fillings using our triangulation of $M_{\varphi^\circ}$ to obtain the manifold $M_{\varphi}$, which we verified agrees (up to isometry) with the computation of $M_{\varphi}$ given by the Twister module \cite{Twister} of SnapPy. We also checked that the triangulations produced are veering (this is described in Appendix \ref{sec:veering_check}.)

\begin{table}[htbp]
  \centering
  \caption*{Column key for the following tables.}
    \begin{tabular}{|l|l|}
		\hline
    Dehn twist word & \multicolumn{1}{m{12cm}|}{Dehn twists representing $\varphi$, e.g. b1 C1 C2 B2 represents the mapping class given by a left Dehn twist in $b_1$, followed by right Dehn twist in $c_1$ and so on. Lowercase letters indicate left Dehn twists and uppercase indicate right Dehn twists.} \\ \hline
    Isom. class & Hyperbolic isometry class of $M$. \\ \hline
    Growth rate & Dilatation of $\varphi$. \\ \hline
    Volume & \multicolumn{1}{m{12cm}|}{Hyperbolic volume of $M$. An asterisk following the volume indicates that SnapPy reports that Agol's veering triangulation of $M$ is non-geometric.}\\ \hline
    \#Sing. & Number of singular points in invariant foliation of $\varphi$. \\ \hline
    \#Cusps & Number of cusps of $M$. \\ \hline
    \#Tet. & Number of tetrahedra in Agol's triangulation. \\ \hline
    Prongs. & \multicolumn{1}{m{12cm}|}{Number of prongs at each singular point. The first value in the list is the number of prongs at the puncture of $S$.}\\ \hline
    Sing. perm & \multicolumn{1}{m{12cm}|}{Permutation of singular points with respect to the order listed in the column \emph{Prongs.}, where e.g. $1, 2, 0$ represents the permutation $0\rightarrow 1$, $1 \rightarrow 2$, $2 \rightarrow 0$.} \\ \hline
    Sing. to cusp & \multicolumn{1}{m{12cm}|}{Cusp number of respective singular point, i.e. indicating which singular points are identified as a single cusp of $M$. For example, 0, 0, 1 would indicate that singular points 0 and 1 form one cusp, and singular point 2 forms another cusp of $M$.} \\ \hline
    \end{tabular}%
  \label{tab:addlabel0}%
\end{table}%

\begin{landscape}
\begin{table}[htbp]\small
  \centering
		\caption{Start of list of examples for once-punctured genus 2 surface.}
    \begin{tabular}{|l|l|l|l|l|l|l|l|l|l|l|}
		\hline
    Dehn twist word & \specialcell{Isom.\\class} & \specialcell{Growth\\rate} & Volume & \#Sing. & \#Cusps & \#Tet. & Prongs & Sing. perm & \specialcell{Sing. to\\cusp} \\
		\hline
    a1 a1 a1 b1 c1 b2 & 486   & 1.72208 & 3.17729 & 1     & 1     & 4     & 6   & 0   & 0 \\
    a1 a1 a1 b1 c1 b2 c2 & 485   & 1.88320 & 4.05977 & 2     & 2     & 9     & 4, 4 & 0, 1 & 0, 1 \\
    a1 C1 B1 B1 A1 A1 B2 & 177   & 2.08102 & 4.46466 & 1     & 1     & 5     & 6   & 0   & 0 \\
    a1 b1 c1 b1 b1 a1 b2 & 441   & 2.29663 & 4.46466 & 1     & 1     & 7     & 6   & 0   & 0 \\
    a1 b1 C1 C1 C2 B2 C1 & 392   & 1.88320 & 4.74950 & 2     & 2     & 5     & 1, 7 & 0, 1 & 0, 1 \\
    a1 a1 a1 a1 b1 c1 b2 & 487   & 2.89005 & 4.85117* & 1     & 1     & 13    & 6   & 0   & 0 \\
    b1 C1 B1 B2 C1 C2 & 32    & 2.36921 & 5.04490 & 1     & 1     & 6     & 6   & 0   & 0 \\
    a1 b1 C1 B2 C1 C2 & 388   & 1.72208 & 5.33349 & 4     & 3     & 6     & 1, 3, 4, 4 & 0, 1, 3, 2 & 0, 1, 2, 2 \\
    b1 C1 C2 B2 & 21    & 2.15372 & 5.33349 & 3     & 2     & 6     & 2, 4, 4 & 0, 2, 1 & 0, 1, 1 \\
    b1 C1 C2 B2 C1 & 21    & 2.29663 & 5.33349 & 2     & 2     & 6     & 2, 6 & 0, 1 & 0, 1 \\
    a1 B1 A1 C1 B1 a1 B2 & 327   & 2.61803 & 5.33349 & 1     & 1     & 7     & 6   & 0   & 0 \\
    a1 b1 b1 c1 c2 b2 & 455   & 2.29663 & 6.02305 & 2     & 2     & 10    & 4, 4 & 0, 1 & 0, 1 \\
    a1 B1 C1 B1 B2 C2 & 250   & 1.91650 & 6.35459 & 6     & 3     & 10    & 1, 3, 3, 3, 3, 3 & 0, 1, 5, 2, 3, 4 & 0, 1, 2, 2, 2, 2 \\
    b1 c1 c1 b2 c2 c2 & 122   & 2.89005 & 6.35459 & 1     & 1     & 7     & 6   & 0   & 0 \\
    b1 c2 B2 C1 B1 B1 c2 & 44    & 3.09066 & 6.35459 & 3     & 2     & 10    & 2, 4, 4 & 0, 2, 1 & 0, 1, 1 \\
    b1 b1 C1 C2 B2 C1 & 44    & 3.25426 & 6.35459 & 2     & 2     & 10    & 2, 6 & 0, 1 & 0, 1 \\
    a1 B1 C1 B2 C2 & 219   & 1.96355 & 6.36674 & 6     & 2     & 9     & 1, 3, 3, 3, 3, 3 & 0, 5, 1, 2, 3, 4 & 0, 1, 1, 1, 1, 1 \\
    a1 b1 C1 B2 C2 & 386   & 1.68491 & 6.55174 & 6     & 3     & 10    & 1, 3, 3, 3, 3, 3 & 0, 2, 1, 5, 3, 4 & 0, 1, 1, 2, 2, 2 \\
    a1 b1 C1 B2 B2 C2 & 386   & 1.97482 & 6.55174 & 4     & 3     & 10    & 1, 3, 3, 5 & 0, 2, 1, 3 & 0, 1, 1, 2 \\
    b1 c1 b2 C2 C2 C2 & 110   & 4.05624 & 6.75519 & 3     & 2     & 14    & 2, 4, 4 & 0, 2, 1 & 0, 1, 1 \\
    b1 b1 c1 b2 C2 C2 C2 & 110   & 4.21208 & 6.75519 & 2     & 2     & 14    & 2, 6 & 0, 1 & 0, 1 \\
    a1 B1 A1 C1 B1 B2 C2 & 323   & 2.61803 & 6.92738 & 2     & 2     & 11    & 4, 4 & 0, 1 & 0, 1 \\
    b1 c1 b2 C2 C2 C2 C2 & 109   & 5.03756 & 6.95235 & 3     & 2     & 18    & 2, 4, 4 & 0, 2, 1 & 0, 1, 1 \\
    a1 b1 b1 c1 b2 c2 b2 & 457   & 2.61803 & 7.16394 & 2     & 2     & 9     & 4, 4 & 0, 1 & 0, 1 \\
    a1 a1 b2 c1 b1 b1 & 476   & 4.13016 & 7.16394* & 1     & 1     & 14    & 6   & 0   & 0 \\
    a1 a1 b1 C1 B2 C1 C2 & 477   & 1.97482 & 7.32772 & 5     & 4     & 8     & 1, 3, 3, 3, 4 & 0, 3, 2, 1, 4 & 0, 1, 2, 1, 3 \\
			\hline
    \end{tabular}%
  \label{table:s1}%
\end{table}%
\end{landscape}

\begin{landscape}
\begin{table}[htbp]\small
  \centering
		\caption{Continuation of Table \ref{table:s1}}
    \begin{tabular}{|l|l|l|l|l|l|l|l|l|l|l|}
		\hline
    Dehn twist word & \specialcell{Isom.\\class} & \specialcell{Growth\\rate} & Volume & \#Sing. & \#Cusps & \#Tet. & Prongs & Sing. perm & \specialcell{Sing. to\\cusp}\\
		\hline
    a1 b1 c1 c1 b2 c2 & 437   & 2.61803 & 7.32772 & 2     & 2     & 10    & 4, 4 & 0, 1 & 0, 1 \\
    a1 B1 A1 C1 B1 B2 B2 & 324   & 3.34697 & 7.41416 & 1     & 1     & 8     & 6   & 0   & 0 \\
    a1 b1 c1 c1 c1 b2 & 439   & 4.30933 & 7.50977 & 1     & 1     & 14    & 6   & 0   & 0 \\
    a1 B1 c1 b2 c1 c2 & 359   & 3.50607 & 7.56148 & 2     & 2     & 9     & 2, 6 & 0, 1 & 0, 1 \\
    a1 b1 c1 B1 c1 b1 b2 & 430   & 4.13016 & 7.60355 & 1     & 1     & 11    & 6   & 0   & 0 \\
    b1 c1 c1 c2 c2 c2 b2 & 121   & 3.73205 & 7.77234 & 1     & 1     & 8     & 6   & 0   & 0 \\
    a1 b1 c1 B2 C2 B2 & 416   & 1.96355 & 7.86790 & 5     & 3     & 9     & 1, 3, 3, 3, 4 & 0, 3, 1, 2, 4 & 0, 1, 1, 1, 2 \\
    a1 b1 A1 b1 c1 B2 C2 & 416   & 2.08102 & 7.86790 & 4     & 3     & 9     & 1, 5, 3, 3 & 0, 1, 3, 2 & 0, 1, 2, 2 \\
    b1 C1 B1 B1 B2 C1 C2 & 33    & 5.27451 & 7.94511 & 1     & 1     & 15    & 6   & 0   & 0 \\
    a1 B1 c1 b2 c2 & 357   & 3.44148 & 7.97974 & 3     & 2     & 10    & 2, 4, 4 & 0, 2, 1 & 0, 1, 1 \\
    a1 b1 b1 C1 B2 C2 C2 & 445   & 2.08102 & 8.00023 & 4     & 3     & 9     & 1, 4, 3, 4 & 0, 3, 2, 1 & 0, 1, 2, 1 \\
    b1 C2 B2 C1 C1 & 7     & 2.61803 & 8.00023 & 3     & 2     & 9     & 2, 4, 4 & 0, 2, 1 & 0, 1, 1 \\
    b1 b1 c1 b2 b2 C2 & 7     & 2.89005 & 8.00023 & 2     & 2     & 9     & 2, 6 & 0, 1 & 0, 1 \\
    a1 b1 c1 b2 C2 C2 & 432   & 2.98307 & 8.00577 & 6     & 2     & 17    & 1, 3, 3, 3, 3, 3 & 0, 5, 1, 2, 3, 4 & 0, 1, 1, 1, 1, 1 \\
    a1 B1 C1 B1 B2 c2 & 254   & 2.96557 & 8.11953 & 2     & 2     & 8     & 2, 6 & 0, 1 & 0, 1 \\
    a1 a1 B1 C1 B1 B2 C2 & 469   & 2.94699 & 8.19570 & 6     & 3     & 22    & 1, 3, 3, 3, 3, 3 & 0, 1, 5, 2, 3, 4 & 0, 1, 2, 2, 2, 2 \\
    a1 a1 b1 b1 c1 c2 b2 & 482   & 4.21208 & 8.24198* & 2     & 2     & 26    & 4, 4 & 0, 1 & 0, 1 \\
    a1 a1 a1 b2 c1 b1 b1 & 483   & 6.11129 & 8.25157* & 1     & 1     & 23    & 6   & 0   & 0 \\
    a1 b1 c1 c1 c1 c1 b2 & 440   & 6.37425 & 8.25986 & 1     & 1     & 23    & 6   & 0   & 0 \\
    a1 a1 B1 c1 b2 c1 c2 & 473   & 4.61158 & 8.41069 & 2     & 2     & 12    & 2, 6 & 0, 1 & 0, 1 \\
    a1 b1 b1 b1 b2 c1 c2 & 462   & 4.41948 & 8.50320 & 2     & 2     & 26    & 4, 4 & 0, 1 & 0, 1 \\
    a1 B1 C1 C1 B2 & 235   & 2.61803 & 8.51918 & 3     & 2     & 10    & 2, 4, 4 & 0, 2, 1 & 0, 1, 1 \\
    a1 b1 c1 b2 C2 C2 C2 & 431   & 3.99112 & 8.58353* & 6     & 2     & 25    & 1, 3, 3, 3, 3, 3 & 0, 5, 1, 2, 3, 4 & 0, 1, 1, 1, 1, 1 \\
    b1 c1 b1 b2 b2 c2 c2 & 131   & 4.61158 & 8.61242 & 1     & 1     & 10    & 6   & 0   & 0 \\
    b1 c1 c1 c1 b2 c2 b2 & 127   & 4.96069 & 8.71643 & 1     & 1     & 11    & 6   & 0   & 0 \\
    a1 a1 b1 c1 c1 c2 b2 & 480   & 3.25426 & 8.77866 & 2     & 2     & 12    & 4, 4 & 0, 1 & 0, 1 \\
    a1 b1 c1 B2 c1 c2 b2 & 429   & 3.13000 & 8.86560 & 2     & 2     & 11    & 4, 4 & 0, 1 & 0, 1 \\
		\hline
    \end{tabular}%
  \label{table:s2}%
\end{table}%
\end{landscape}

\begin{landscape}
\begin{table}[htbp]\small
  \centering
	\caption{Triangulations that SnapPy reports are non-geometric.}
    \begin{tabular}{|l|l|l|l|l|l|l|l|l|l|l|}
		\hline
    Dehn twist word & \specialcell{Isom.\\class} & \specialcell{Growth\\rate} & Volume & \#Sing. & \#Cusps & \#Tet. & Prongs & Sing. perm & \specialcell{Sing. to\\cusp}\\
		\hline
    a1 a1 a1 a1 b1 c1 b2 & 487   & 2.89005 & 4.85117 & 1     & 1     & 13    & 6   & 0   & 0 \\
    a1 a1 b2 c1 b1 b1 & 476   & 4.13016 & 7.16394 & 1     & 1     & 14    & 6   & 0   & 0 \\
    a1 a1 b1 b1 c1 c2 b2 & 482   & 4.21208 & 8.24198 & 2     & 2     & 26    & 4, 4 & 0, 1 & 0, 1 \\
    a1 a1 a1 b2 c1 b1 b1 & 483   & 6.11129 & 8.25157 & 1     & 1     & 23    & 6   & 0   & 0 \\
    a1 b1 c1 b2 C2 C2 C2 & 431   & 3.99112 & 8.58353 & 6     & 2     & 25    & 1, 3, 3, 3, 3, 3 & 0, 5, 1, 2, 3, 4 & 0, 1, 1, 1, 1, 1 \\
    a1 a1 b2 c1 b1 b1 b1 & 475   & 7.22040 & 9.14624 & 1     & 1     & 23    & 6   & 0   & 0 \\
    a1 b1 b1 b2 c1 c1 c2 & 451   & 5.52042 & 10.77524 & 2     & 2     & 26    & 4, 4 & 0, 1 & 0, 1 \\
    a1 C1 b2 c1 c1 B1 B1 & 189   & 9.16582 & 11.40320 & 1     & 1     & 21    & 6   & 0   & 0 \\
    a1 B1 a1 B1 C1 B2 C2 & 365   & 4.43066 & 12.86506 & 6     & 2     & 27    & 1, 3, 3, 3, 3, 3 & 0, 3, 1, 5, 2, 4 & 0, 1, 1, 1, 1, 1 \\
    b1 c1 B2 B2 B2 c2 B2 & 92    & 12.40010 & 13.37217 & 3     & 2     & 39    & 4, 3, 3 & 0, 2, 1 & 0, 1, 1 \\
    a1 B1 c1 B1 b2 c1 c2 & 355   & 7.99808 & 13.78589 & 3     & 2     & 21    & 2, 4, 4 & 0, 2, 1 & 0, 1, 1 \\
    a1 b1 c2 B2 c1 B2 c1 & 355   & 7.99808 & 13.78589 & 3     & 2     & 21    & 2, 4, 4 & 0, 2, 1 & 0, 1, 1 \\
    b1 c1 c1 B2 B2 c2 & 119   & 6.97984 & 14.17690 & 3     & 2     & 24    & 4, 3, 3 & 0, 2, 1 & 0, 1, 1 \\
    b1 b1 c1 c1 B2 B2 c2 & 119   & 7.87298 & 14.17690 & 2     & 2     & 24    & 4, 4 & 0, 1 & 0, 1 \\
    b1 B2 B2 c2 c2 B2 c1 & 10    & 14.48078 & 14.86313 & 3     & 2     & 31    & 4, 3, 3 & 0, 2, 1 & 0, 1, 1 \\
    b1 c1 B2 c2 c2 B2 B2 & 10    & 14.48078 & 14.86313 & 3     & 2     & 31    & 4, 3, 3 & 0, 2, 1 & 0, 1, 1 \\
    b1 c1 c1 B2 B2 B2 c2 & 117   & 9.89898 & 14.88705 & 3     & 2     & 33    & 4, 3, 3 & 0, 2, 1 & 0, 1, 1 \\
    b1 c1 B2 c1 B2 c2 & 100   & 8.19987 & 15.05432 & 3     & 2     & 31    & 4, 3, 3 & 0, 2, 1 & 0, 1, 1 \\
    b1 c1 B2 c1 c2 B2 & 100   & 8.19987 & 15.05432 & 3     & 2     & 31    & 4, 3, 3 & 0, 2, 1 & 0, 1, 1 \\
    b1 b1 c1 B2 c1 B2 c2 & 100   & 9.89898 & 15.05432 & 2     & 2     & 31    & 4, 4 & 0, 1 & 0, 1 \\
    b1 b1 c1 B2 c1 c2 B2 & 100   & 9.89898 & 15.05432 & 2     & 2     & 31    & 4, 4 & 0, 1 & 0, 1 \\
    b1 c1 c1 c1 B2 B2 c2 & 124   & 8.30401 & 15.60969 & 3     & 2     & 25    & 4, 3, 3 & 0, 2, 1 & 0, 1, 1 \\
    b1 c1 B2 B2 c1 B2 c2 & 95    & 12.21889 & 15.73326 & 3     & 2     & 47    & 4, 3, 3 & 0, 2, 1 & 0, 1, 1 \\
    a1 b1 C1 C1 b1 b2 C1 & 95    & 12.21889 & 15.73326 & 3     & 2     & 47    & 4, 3, 3 & 0, 2, 1 & 0, 1, 1 \\
    a1 B1 B1 C1 B1 B1 B2 & 297   & 4.19530 & 16.38405 & 5     & 3     & 28    & 2, 3, 3, 3, 3 & 0, 2, 1, 4, 3 & 0, 1, 1, 2, 2 \\
    b1 C1 b2 C1 b1 b2 c2 & 39    & 11.31474 & 16.62155 & 3     & 2     & 32    & 4, 3, 3 & 0, 2, 1 & 0, 1, 1 \\
		\hline
    \end{tabular}%
  \label{table:nongeo1}%
\end{table}%
\end{landscape}

\begin{landscape}
\begin{table}[htbp]\small
  \centering
	\caption{Triangulations that SnapPy reports are non-geometric. (Continuation of Table \ref{table:nongeo1})}
    \begin{tabular}{|l|l|l|l|l|l|l|l|l|l|l|}
		\hline
    Dehn twist word & \specialcell{Isom.\\class} & \specialcell{Growth\\rate} & Volume & \#Sing. & \#Cusps & \#Tet. & Prongs & Sing. perm & \specialcell{Sing. to\\cusp}\\
		\hline
    b1 c1 B2 c1 B2 c2 c2 & 39    & 11.31474 & 16.62155 & 3     & 2     & 32    & 4, 3, 3 & 0, 2, 1 & 0, 1, 1 \\
    b1 c1 B2 c1 B2 B2 c2 & 99    & 12.99411 & 16.80174 & 3     & 2     & 40    & 4, 3, 3 & 0, 2, 1 & 0, 1, 1 \\
    b1 c1 B2 c1 c2 B2 B2 & 99    & 12.99411 & 16.80174 & 3     & 2     & 40    & 4, 3, 3 & 0, 2, 1 & 0, 1, 1 \\
    a1 B1 c1 B2 c1 c1 C2 & 349   & 10.12331 & 17.14005 & 4     & 3     & 34    & 3, 3, 3, 3 & 0, 1, 3, 2 & 0, 1, 2, 2 \\
    a1 B1 c1 c1 B2 c1 C2 & 349   & 10.12331 & 17.14005 & 4     & 3     & 34    & 3, 3, 3, 3 & 0, 1, 3, 2 & 0, 1, 2, 2 \\
    b1 c1 B2 c1 c1 B2 c2 & 101   & 10.59023 & 17.31324 & 3     & 2     & 32    & 4, 3, 3 & 0, 2, 1 & 0, 1, 1 \\
    b1 c1 c1 B2 c1 c2 B2 & 101   & 10.59023 & 17.31324 & 3     & 2     & 32    & 4, 3, 3 & 0, 2, 1 & 0, 1, 1 \\
    b1 c1 B2 c2 B2 c1 B2 & 96    & 14.19162 & 17.64673 & 3     & 2     & 47    & 4, 3, 3 & 0, 2, 1 & 0, 1, 1 \\
    a1 b1 C1 C1 b2 C1 c2 & 399   & 8.90988 & 17.69647 & 5     & 3     & 32    & 2, 3, 3, 3, 3 & 0, 2, 1, 4, 3 & 0, 1, 1, 2, 2 \\
    a1 C1 C1 B1 B2 B2 C2 & 174   & 5.10315 & 17.96735 & 6     & 3     & 25    & 1, 3, 3, 3, 3, 3 & 0, 2, 1, 5, 3, 4 & 0, 1, 1, 2, 2, 2 \\
    a1 B1 C1 C1 C1 C1 B2 & 238   & 5.52042 & 18.05663 & 5     & 2     & 31    & 2, 3, 3, 3, 3 & 0, 2, 3, 4, 1 & 0, 1, 1, 1, 1 \\
    a1 b1 C1 b2 C1 c2 b2 & 407   & 7.84775 & 18.92441 & 5     & 3     & 33    & 2, 3, 3, 3, 3 & 0, 4, 3, 2, 1 & 0, 1, 2, 2, 1 \\
    a1 b1 C1 b2 C1 b2 c2 & 407   & 7.84775 & 18.92441 & 5     & 3     & 33    & 2, 3, 3, 3, 3 & 0, 4, 3, 2, 1 & 0, 1, 2, 2, 1 \\
    a1 B1 C2 B2 c1 B2 c1 & 195   & 8.73741 & 19.05808 & 4     & 3     & 35    & 3, 3, 3, 3 & 0, 1, 3, 2 & 0, 1, 2, 2 \\
    a1 B1 c1 B2 c1 B2 C2 & 195   & 8.73741 & 19.05808 & 4     & 3     & 35    & 3, 3, 3, 3 & 0, 1, 3, 2 & 0, 1, 2, 2 \\
    a1 B1 B1 C1 C1 C1 B2 & 292   & 5.96799 & 19.15756 & 5     & 2     & 38    & 2, 3, 3, 3, 3 & 0, 2, 4, 1, 3 & 0, 1, 1, 1, 1 \\
    a1 B1 C1 C1 C2 b2 b2 & 228   & 7.20200 & 19.51039 & 4     & 3     & 30    & 3, 3, 3, 3 & 0, 2, 1, 3 & 0, 1, 1, 2 \\
    a1 b1 b1 c1 B2 B2 c1 & 453   & 6.53277 & 20.27426 & 5     & 3     & 38    & 2, 3, 3, 3, 3 & 0, 2, 1, 4, 3 & 0, 1, 1, 2, 2 \\
    a1 B1 C1 C2 b2 C1 b2 & 217   & 8.51867 & 20.38781 & 4     & 3     & 37    & 3, 3, 3, 3 & 0, 2, 1, 3 & 0, 1, 1, 2 \\
    a1 B1 C1 b2 C1 C2 b2 & 217   & 8.51867 & 20.38781 & 4     & 3     & 37    & 3, 3, 3, 3 & 0, 2, 1, 3 & 0, 1, 1, 2 \\
    a1 B1 C1 C1 B1 B1 B2 & 244   & 8.02836 & 20.54293 & 5     & 3     & 45    & 2, 3, 3, 3, 3 & 0, 2, 1, 4, 3 & 0, 1, 1, 2, 2 \\
    a1 B1 B1 C1 C1 B1 B2 & 244   & 8.02836 & 20.54293 & 5     & 3     & 45    & 2, 3, 3, 3, 3 & 0, 2, 1, 4, 3 & 0, 1, 1, 2, 2 \\
		\hline
    \end{tabular}%
  \label{table:nongeo2}%
\end{table}%
\end{landscape}

\nocite{SnapPea}
\phantomsection
\bibliography{references}{}
\bibliographystyle{plain}

\end{document}